\keywords{Moschovakis extension, represented space, effective topological space, effective metric space, TTE, partial function, multi-valued function, realization, abstract first order computability, computable, absolutely prime computable, combinatory space, iteration, first recursion theorem}
\theoremstyle{plain}
\newtheorem{re}[thm]{Remark}
\newtheorem*{nre}{Remark}
\newtheorem{ex}[thm]{Example}
\theoremstyle{plain}\newtheorem{pr}[thm]{Proposition}
\newtheorem{co}[thm]{Corollary}
\newcommand{\dotminus}{\stackrel{\cdot}{\relbar}}
\def\eg{{\em e.g.}}
\def\cf{{\em cf.}}
\def\ie{{\em i.e.}}
\begin{document}
\title{Moschovakis extension of represented spaces}
\author{Dimiter Skordev}	%required
\address{Sofia University, Faculty of Mathematics and Informatics, Sofia, Bulgaria}	%required
\email{skordev@fmi.uni-sofia.bg}  %optional
%\thanks{thanks 1, optional.}	%optional

%% etc.

%% required for running head on odd and even pages, use suitable
%% abbreviations in case of long titles and many authors:

%%%%%%%%%%%%%%%%%%%%%%%%%%%%%%%%%%%%%%%%%%%%%%%%%%%%%%%%%%%%%%%%%%%%%%%%%%%

%% the abstract has to PRECEDE the command \maketitle:
%% be sure not to issue the \maketitle command twice!

\begin{abstract}
\noindent Given a represented space (in the sense of TTE theory), an appropriate representation is constructed for the Moschovakis extension of its carrier (with paying attention to the cases of effective topological spaces and effective metric spaces). Some results are presented about TTE computability in the represented space obtained in this way. For single-valued functions, we prove, roughly speaking, the computability of any function which is absolutely prime computable in some computable functions. A similar result holds for multi-valued functions, but with an analog of absolute prime computability. The formulation of this result makes use of the notion of computability in iterative combinatory spaces -- a notion studied by the author in other publications.
\end{abstract}
\maketitle
%% start the paper here:
\section*{Introduction}
By \cite[Theorems 3.1.6 and 3.1.7]{weihrauch:ca}, TTE computability of functions is preserved under composition and primitive recursion. In the case of composition, multi-valued functions are also admitted, but their composition is different from the usual one ({\ie} from the one reducible to composition of relations). In \cite{weihrauch:cmfmrscp} such closedness is proved with respect to flowchart programs in the general case of multi-valued functions in sets with generalized multi-representations. There are reasons to expect that all natural kinds of relative computability via deterministic programs using given single-valued functions can be obtained as instances of the notion of absolute prime computability introduced in \cite{moschovakis:afoc}. The notion in question makes sense also in the case of multi-valued given functions, but ceases in this case to be adequate for such a kind of study concerning TTE computability (this is due, in particular, to the fact that the usual composition of multi-valued functions does not preserve TTE computability\footnote{The TTE computability of multi-valued functions considered here is of the kind from \cite{weihrauch:ca}. It is referred to as weak one in \cite{brattka:cts}, where another computability notion for such functions is studied (see Definition~7.1 there).}). Fortunately, the general notion of computability in iterative combinatory spaces introduced by the present author ({\cf} {\eg} \cite{skordev:ccs}) embraces as particular instances numerous kinds of computability via programs (including absolute prime computability), and there is one among them which is appropriate for the case of programming through given multi-valued TTE computable functions.

The definition of absolute prime computability given in \cite{moschovakis:afoc} uses a construction which extends any given set by adding an additional object and building the closure with respect to ordered pairs. The extended set built in this way is used as a universe where the computations are carried. In Section \ref{S:1} of the present paper, two iterative combinatory spaces $\mathfrak{S}_1(X)$ and $\mathfrak{S}_2(X)$ are considered for any set $X$. Each of them is based on a semigroup of functions in the Moschovakis extension of $X$. In the case of $\mathfrak{S}_1(X)$, this is the semigroup of the unary single-valued partial functions with the composition operation as multiplication. In the case of $\mathfrak{S}_2(X)$, the semigroup consists of the unary multi-valued partial functions, and the other kind of composition mentioned above is used as multiplication. The computability in $\mathfrak{S}_1(X)$ is equivalent to absolute prime computability (restricted to single-valued partial functions). The computability in $\mathfrak{S}_2(X)$ is an analog of absolute prime computability, but it is different from it.
  
TTE computability is usually considered for functions between represented spaces. In Section~\ref{S:2}, a way is considered for turning into a represented space the Moschovakis extension $X^*$ of the carrier $X$ of any given represented space. Some particular cases are studied -- for instance, the case when the given represented space is the one generated by an effective topological space with carrier $X$ or the one generated by an effective metric space with carrier~$X$. In these cases, it is shown that the corresponding represented space with carrier $X^*$ can be generated by an effective space of the same kind with carrier~$X^*$.

In the last section, Section \ref{S:3}, the combinatory spaces $\mathfrak{S}_1(X)$ and $\mathfrak{S}_2(X)$ from Section~\ref{S:1} are considered in the case when the given set is the carrier $X$ of a represented space. It is shown that $\mathfrak{S}_1(X)$-computability and $\mathfrak{S}_2(X)$-computability preserve TTE computability in the case of single-valued functions and in the case of multi-valued ones, respectively. This allows applying the First Recursion Theorem from the general theory of iterative combinatory spaces for establishing TTE computability of some concrete functions. Two examples are given in which $X$ is the set of the real numbers and the usual computability of real numbers and functions is considered.

It is worth noting that the considerations in Sections \ref{S:2} and \ref{S:3} can be generalized for multi-represented spaces.

\begin{nre}
Since many definitions of TTE notions in different sources, although usually essentially equipollent, diverge in some details, the versions used in this paper of most definitions of such notions are explicitly indicated.
\end{nre}

\section{Moschovakis extensions and some combinatory spaces of functions in it}\label{S:1}

\subsection{The notion of Moschovakis extension}
Let $X$ be an arbitrary set. A \emph{Moschovakis extension} $X^*$ of $X$ is constructed by taking an object $\bm{o}$ not in $X$ and building the closure of $X\cup\{\bm{o}\}$ under the formation of ordered pairs, assuming the ordered pair operation is chosen in such a way that no element of $X\cup\{\bm{o}\}$ is an ordered pair (see~\cite{moschovakis:afoc}, where the set $X$ and the element $\bm{o}$ are denoted by $B$ and $0$, respectively).\footnote{The bold symbol $\bm{o}$ used here should not be confused with the normal text symbol $o$ used elsewhere by the author for least elements of iterative combinatory spaces.} Finite sequences of elements of $X^*$ can be encoded by elements of $X^*$ as follows: we use the element $\bm{o}$ as a code of the empty sequence, and, for any $k\in\mathbb{N}$, we encode the sequence $z_0,z_1,\ldots,z_k$ by the ordered pair $(z_0,z)$, where $z$ encodes the sequence $z_1,\ldots,z_k$ (for nonempty sequences, this encoding coincides with the one introduced in~\cite[(1.9)]{moschovakis:afoc}). For any $n\in\mathbb{N}$, let $n^*$ be the code of the $n$-term sequence of $\bm{o}$'s. The element $n^*$ will be regarded also as a code of the natural number $n$, thus the numbers $0,\,1,\,2,\,3,\,\ldots$ will be encoded by the elements $\bm{o},\,(\bm{o},\bm{o}),\,(\bm{o},(\bm{o},\bm{o})),\,(\bm{o},(\bm{o},(\bm{o},\bm{o}))),\,\ldots$ of~$X^*$.\footnote{Moschovakis uses, so to speak, the mirror-image of this encoding of the natural numbers -- in the present notation, the elements $\bm{o},\,(\bm{o},\bm{o}),\,((\bm{o},\bm{o}),\bm{o}),\,(((\bm{o},\bm{o}),\bm{o}),\bm{o}),\,\ldots$ are the ones used by him (\cf~\cite[(1.3)]{moschovakis:afoc}).} Let $\mathbb{N}^*=\{n^*\,|\,n\in\mathbb{N}\}$, where $\mathbb{N}$, as usual, is the set of the natural numbers. Having in mind the above or similar encodings, we may represent partial functions in $X^*$ of any number of arguments by unary partial functions in $X^*$, and partial functions of any number of arguments from $X^*$ to $\mathbb{N}$ by unary partial functions from the set $X^*$ to its subset~$\mathbb{N}^*$. Partial functions of any number of arguments from $\mathbb{N}$ to $X^*$ and such ones in $\mathbb{N}$ can be similarly represented by unary partial functions in~$X^*$.

\subsection{Combinatory spaces. The spaces \texorpdfstring{$\mathfrak{S}_1(X)$}{S1(X)} and \texorpdfstring{$\mathfrak{S}_2(X)$}{S2(X)}}

We will recall below the definition of the notion of combinatory space from \cite{skordev:ccs} (Definition~1 on pp.~45--46 there). For the sake of ease of comprehension, we, however, will first give an example of combinatory space (in this example which is an instance of Example~1 on p.~48 of \cite{skordev:ccs}, it would be appropriate to read $(x,y)\in\theta$ as follows: ``$y$ is an image of $x$ under $\theta$''). Let $X$ be a set, and let
\begin{equation}\label{cs}
\mathfrak{S}=(\mathcal{F},I,\mathcal{C},\Pi,L,R,\Sigma,T,F),
\end{equation}
where $\mathcal{F}$ is the set of all binary relations in $X^*$ considered with the composition operation
\[\varphi\psi=\{(x,z)\mid\exists y((x,y)\in\psi\ \&\ (y,z)\in\varphi)\}\]
and the partial ordering by inclusion, $I$ is the diagonal relation on $X^*$, $\mathcal{C}$ is the set of all relations of the form $X^*\times\{c\}$, where $c\in X^*$, $\Pi$ (juxtaposition) is the binary operation in $\mathcal{F}$ defined by
\[\Pi(\varphi,\psi)=\{(z,(x,y))\mid(z,x)\in\varphi\ \&\ (z,y)\in\psi\},\]
$L=\{((x,y),x)\mid x,y\in X^*\}$, $R=\{((x,y),y)\mid x,y\in X^*\}$, $\Sigma$ (branching) is the ternary operation in $\mathcal{F}$ defined by
\begin{multline*}
\Sigma(\chi,\varphi,\psi)=\{(x,z)\mid\exists y((x,y)\in\chi\\\&\ ((y\in X^*\setminus(X\cup\{\bm{o}\})\ \&\ (x,z)\in\varphi)\vee(y\in X\cup\{\bm{o}\}\ \&\ (x,z)\in\psi))\},
\end{multline*}
$T=X^*\times\{(\bm{o},\bm{o})\}$, $F=X^*\times\{\bm{o}\}$.
Then $\mathfrak{S}$ is a combinatory space in the sense of the above-mentioned definition which reads as follows (up to some changes in the notations): a combinatory space is a 9-tuple \eqref{cs},   
where $\mathcal{F}$ is a partially ordered semigroup, $I$ is an identity of this semigroup, $\mathcal{C}$ is a subset of~$\mathcal{F}$, $\Pi$ and $\Sigma$ are, respectively, a binary and a ternary operation in $\mathcal{F}$, $L,R,T,F$ are elements of $\mathcal{F}$, and the following conditions are identically satisfied when $\varphi,\psi,\theta,\chi$ range over $\mathcal{F}$ and $\xi,\eta$ range over $\mathcal{C}$:
{
\allowdisplaybreaks
\begin{gather}
\forall\xi(\varphi\xi\ge\psi\xi)\Rightarrow\varphi\ge\psi,\\
\Pi(\xi,\eta)\in\mathcal{C},\ \ L\Pi(\xi,\eta)=\xi,\ \ R\Pi(\xi,\eta)=\eta,\\
\Pi(\varphi,\psi)\xi=\Pi(\varphi\xi,\psi\xi),\\
\Pi(I,\psi\xi)\theta=\Pi(\theta,\psi\xi),\\
\Pi(\xi,I)\theta=\Pi(\xi,\theta),\label{five}\\
T\ne F,\ \ T\xi\in\mathcal{C},\ \ F\xi\in\mathcal{C},\\
\Sigma(T,\varphi,\psi)=\varphi,\ \ \Sigma(F,\varphi,\psi)=\psi,\\
\theta\Sigma(\chi,\varphi,\psi)=\Sigma(\chi,\theta\varphi,\theta\psi),\\
\Sigma(\chi,\varphi,\psi)\xi=\Sigma(\chi\xi,\varphi\xi,\psi\xi),\\
\Sigma(I,\varphi\xi,\psi\xi)\theta=\Sigma(\theta,\varphi\xi,\psi\xi),\\
\varphi\ge\psi\,\&\,\theta\ge\chi\Rightarrow\Sigma(I,\varphi,\theta)\ge\Sigma(I,\psi,\chi).
\end{gather}
}
It easily follows from the definition that not only the multiplication in $\mathcal{F}$ is monotonically increasing, but so are also the operations $\Pi$ and $\Sigma$.
 
Obviously, \eqref{five} is the particular instance with $\varphi=I$ of the following equality:
\[\Pi(\varphi\xi,I)\theta=\Pi(\varphi\xi,\theta).\]
The combinatory space $\mathfrak{S}$ is called {\em symmetric} if this equality holds for all $\varphi,\theta$ in $\mathcal{F}$ and all $\xi$ in $\mathcal{C}$ (the combinatory space from the above example is a symmetric one).\footnote{In publications of the author prior to \cite{skordev:ccs} (from 1975 on), a somewhat different notion of combinatory space is used. Roughly speaking, the combinatory spaces considered in  these publications are the symmetric combinatory spaces such that $T$ and $F$ belong to $\mathcal{C}$. Such kind of combinatory spaces actually would be sufficient for the present paper. We note, however, that some changes ought to be done in the description of the combinatory spaces $\mathfrak{S}_1(X)$ and $\mathfrak{S}_2(X)$ introduced next if we would want to be in concordance with the publications in question: one should exchange $\bm{o}$ and $(\bm{o},\bm{o})$ in the definition of $T_1$ and $F_1$, as well as $X\cup\{\bm{o}\}$ and $X^*\setminus(X\cup\{\bm{o}\})$ in the definition of $\Sigma_1$, and a similar change would be needed in the definition of $\Sigma_2$.} 

We will define now two different symmetric combinatory spaces \[\mathfrak{S}_1(X)=(\mathcal{F}_1,I_1,\mathcal{C}_1,\Pi_1,L_1,R_1,\Sigma_1,T_1,F_1),\ \ \mathfrak{S}_2(X)=(\mathcal{F}_2,I_2,\mathcal{C}_2,\Pi_2,L_2,R_2,\Sigma_2,T_2,F_2)\]
with semigroups $\mathcal{F}_1$ and $\mathcal{F}_2$ consisting of functions in the Moschovakis extension $X^*$ of a an arbitrary given set~$X$.

The combinatory space $\mathfrak{S}_1(X)$ is an instance of the ones considered in \cite[Example~2 on p.~48]{skordev:ccs}. We take as $\mathcal{F}_1$ the partially ordered semigroup of all partial mappings of $X^*$ into $X^*$ (including the total ones) -- with multiplication defined as composition, \ie
\[\varphi\psi(z)=\varphi(\psi(z)),\]
where $\mathrm{dom}(\varphi\psi)=\psi^{-1}(\mathrm{dom}(\varphi))$, and partial order defined by
\[\psi\ge\varphi\Leftrightarrow\psi\text{ is an extension of }\varphi.\] The identity mapping of $X^*$ onto $X^*$ is taken as $I_1$, and $L_1,\,R_1\in\mathcal{F}_1$ are defined in the following way:
\begin{align*}
&L_1(\bm{o})=R_1(\bm{o})=\bm{o},\\
&L_1(z)=R_1(z)=(\bm{o},\bm{o})\ \text{ if }z\in X,\\
&L_1(z)=x,\ R_1(z)=y\ \text{ if }z=(x,y)
\end{align*}
($L_1$ and $R_1$ are the functions $\pi$ and $\delta$ from ~\cite{moschovakis:afoc}). The set of all constant mappings of $X^*$ into $X^*$ is taken as $\mathcal{C}_1$, and $T_1,\,F_1$ are those of them whose values are $(\bm{o},\bm{o})$ and $\bm{o}$, respectively. The operations $\Pi_1$ and $\Sigma_1$ are defined in $\mathcal{F}_1$ as follows:
\begin{gather*}
\Pi_1(\varphi,\psi)(z)=(\varphi(z),\psi(z)),\\
\Sigma_1(\chi,\varphi,\psi)(x)=
\begin{cases}
\varphi(x)&\text{\!if }x\in\chi^{-1}(X^*\setminus(X\cup\{\bm{o}\})),\\
\psi(x)&\text{\!if }x\in\chi^{-1}(X\cup\{\bm{o}\}),
\end{cases}
\end{gather*}
where
\begin{gather*}
\mathrm{dom}(\Pi_1(\varphi,\psi))=\mathrm{dom}(\varphi)\cap\mathrm{dom}(\psi),\\
\mathrm{dom}(\Sigma_1(\chi,\varphi,\psi))=\left(\chi^{-1}(X^*\setminus(X\cup\{\bm{o}\}))\cap\mathrm{dom}(\varphi)\right)\cup\left(\chi^{-1}(X\cup\{\bm{o}\})\cap\mathrm{dom}(\psi)\right).
\end{gather*}

The second combinatory space can be regarded as an extension of the first one. The elements of $\mathcal{F}_2$ will be the partial functions from $X^*$ to the set of the non-empty subsets of~$X^*$, and any $\theta\in\mathcal{F}_1$ will have as its counterpart in $\mathcal{F}_2$ the function $\tilde{\theta}$ transforming each $x\in\mathrm{dom}(\theta)$ into the one-element set $\{\theta(x)\}$. The multiplication in $\mathcal{F}_2$ will be defined in the way indicated in the last sentence on p.~289 of \cite{skordev:ccs}, namely
\begin{gather*}
\mathrm{dom}(\varphi\psi)=\{x\in\mathrm{dom}(\psi)\,|\,\psi(x)\subseteq\mathrm{dom}(\varphi)\},\\
\forall x\in\mathrm{dom}(\varphi\psi)\left(\varphi\psi(x)=\bigcup\{\varphi(y)\,|\,y\in\psi(x)\}\right)
\end{gather*}
(compare with \cite[p.~11]{weihrauch:ca} and the second composition operation introduced in \cite[Definition~6]{weihrauch:cmfmrscp}). The partial order in $\mathcal{F}_2$ will be as in Remark~9 on p.~293 of \cite{skordev:ccs}:
\[\varphi\ge\psi\Leftrightarrow\mathrm{dom}(\varphi)\supseteq\mathrm{dom}(\psi)\,\&\,\forall x\in\mathrm{dom}(\psi)(\varphi(x)\subseteq\psi(x))\]
(compare with \cite[Definition~7]{weihrauch:cmfmrscp}). As one immediately sees, the above-mentioned mapping $\theta\mapsto\tilde{\theta}$ of the partially ordered semigroup $\mathcal{F}_1$ into the partially ordered semigroup $\mathcal{F}_2$ is an isomorphic embedding. We set further
\[\mathcal{C}_2=\{\tilde{\xi}\,|\,\xi\in\mathcal{C}_1\},\]
and we take as $L_2,R_2,T_2,F_2$ the images of $L_1,R_1,T_1,F_1$, respectively, under the mapping $\theta\mapsto\tilde{\theta}$. Finally we set
\begin{gather*}
\Pi_2(\varphi,\psi)(z)=\{(x,y)\,|\,x\in\varphi(z)\,\&\,y\in\psi(z)\},\\
\begin{split}
\Sigma_2(\chi,\varphi,\psi)&(x)\\=\{z\,|\,&(\chi(x)\setminus(X\cup\{\bm{o}\})\ne\varnothing\,\&\,z\in\varphi(x))
\vee(\chi(x)\cap(X\cup\{\bm{o}\})\ne\varnothing\,\&\,z\in\psi(x))\}
\end{split}
\end{gather*}
with
\begin{gather*}
\mathrm{dom}(\Pi_2(\varphi,\psi))=\mathrm{dom}(\varphi)\cap\mathrm{dom}(\psi),\\
\begin{split}
\mathrm{dom}&(\Sigma_2(\chi,\varphi,\psi))=\{x\in\mathrm{dom}(\chi)\,|\,\\
&(\chi(x)\setminus(X\cup\{\bm{o}\})\ne\varnothing\Rightarrow x\in\mathrm{dom}(\varphi))\ \&\ (\chi(x)\cap(X\cup\{\bm{o}\})\ne\varnothing\Rightarrow x\in\mathrm{dom}(\psi))\}
\end{split}
\end{gather*}
(\cf~\cite[the text on pp. 290--291 between Remark~2 and Remark~3, as well as Remark~6 on p.~292]{skordev:ccs}). One verifies that, for all $\chi,\varphi,\psi\in\mathcal{F}_1$, the elements $\Pi_2(\tilde{\varphi},\tilde{\psi})$ and $\Sigma_2(\tilde{\chi},\tilde{\varphi},\tilde{\psi})$ of $\mathcal{F}_2$ are the images of $\Pi_1(\varphi,\psi)$ and $\Sigma_1(\chi,\varphi,\psi)$, respectively, under the mapping $\theta\mapsto\tilde{\theta}$.
\subsection{Iterative combinatory spaces}
If a combinatory space \eqref{cs} is given, and $\sigma,\chi$ are elements of its component $\mathcal{F}$, then the notion of iteration of $\sigma$ controlled by $\chi$ is introduced. Its definition makes use of certain other notions. Let $\mathcal{A}$ be any subset of $\mathcal{C}$. By definition, two elements $\varphi$ and $\psi$ of $\mathcal{F}$ satisfy the inequality $\varphi\underset{\mathcal{A}}{\ge}\psi$ if $\forall\xi\in\mathcal{A}(\varphi\xi\ge\psi\xi)$. The set $\mathcal{A}$ is said to be {\em invariant} with respect to an element $\sigma$ of $\mathcal{F}$ if $\varphi\underset{\mathcal{A}}{\ge}\psi\Rightarrow\varphi\sigma\underset{\mathcal{A}}{\ge}\psi\sigma$ for all $\varphi,\psi\in\mathcal{F}$. By \cite[Definition~1 on p.~74]{skordev:ccs}, {\em the iteration of $\sigma$ controlled by $\chi$} is an element $\iota$ of $\mathcal{F}$ such that $\iota=\Sigma(\chi,\iota\sigma,I)$ and, for all $\tau,\rho\in\mathcal{F}$ and each subset $\mathcal{A}$ of $\mathcal{F}$ which is invariant with respect to $\sigma$, the inequality $\tau\underset{\mathcal{A}}{\ge}\Sigma(\chi,\tau\sigma,\rho)$ implies the inequality $\tau\underset{\mathcal{A}}{\ge}\rho\iota$.\footnote{In the author's publications before 1980, somewhat stronger requirements are imposed on the iteration. This difference, however, is not important for the considerations in the present paper.} By applying the last requirement with $\mathcal{A}=\mathcal{C}$ and $\rho=I$, one makes the following conclusion: if $\iota$ is the iteration of $\sigma$ controlled by $\chi$ then $\iota$ is the least $\tau$ such that $\tau\ge\Sigma(\chi,\tau\sigma,I)$. Thus if the iteration of $\sigma$ controlled by $\chi$ exists then it is a uniquely determined element of $\mathcal{F}$. As in \cite{skordev:ccs}, this element will be denoted by~$[\sigma,\chi]$.

The combinatory space \eqref{cs} is called {\em iterative} if $[\sigma,\chi]$ exists for all $\sigma,\chi\in\mathcal{F}$. It is easily seen that the operation of iteration in an iterative combinatory space is monotonically increasing.

For any given set $X$, the combinatory spaces $\mathfrak{S}_1(X)$ and $\mathfrak{S}_2(X)$ turn out to be iterative. For a class of combinatory spaces which contains $\mathfrak{S}_1(X)$, the property of being iterative is indicated on p.~74 of \cite{skordev:ccs}. For any $\sigma,\chi\in\mathcal{F}_1$, the partial function $[\sigma,\chi]$ can be characterized as follows:\footnote{Assuming that, for $u,u'\in X^*$ and $\varphi\in\mathcal{F}_1$, the condition $u\in\mathrm{dom}(\varphi)$ is taken for granted in statements of the form $\varphi(u)=u'$ and in ones of the form $u'=\varphi(u)$.} $[\sigma,\chi](z)=v$ iff $z$ and $v$ are the initial and the last term, respectively, of a finite sequence of elements of $\mathrm{dom}(\chi)$ such that
\[\chi(u)\not\in X\cup\{\bm{o}\},\ \ u'=\sigma(u),\]
whenever $u$ is a term of this sequence and $u'$ is its next term, and, additionally,
\[\chi(v)\in X\cup\{\bm{o}\}\]
(thus the function $\iota=[\sigma,\chi]$ can be represented in Pascal-like notations by means of the declaration
\begin{align*}
\text{fu}&\text{nction }\iota(z:X^*):X^*;\\
&\text{var }u:X^*;\\
\text{be}&\text{gin}\\
&u:=z;\\
&\text{while }\chi(u)\not\in X\cup\{\bm{o}\}\text{ do }u:=\sigma(u);\\
&\iota:=u\\
\text{en}&\text{d;}).
\end{align*}

The fact that $\mathfrak{S}_2(X)$ is iterative can be seen by using \cite[Proposition~2 on p.~291 and Remark~9 on p.~293]{skordev:ccs}.\footnote{Unfortunately, there are two misprints in condition~(a) of the mentioned proposition, namely the third and the fourth $M$ in this condition must be replaced with $Q$.} In this combinatory space, the characterization of $[\sigma,\chi]$ is more complicated. Actually the description of the set $[\sigma,\chi](z)$ for $z\in
\mathrm{dom}([\sigma,\chi])$ is a natural modification of the characterization of the value $[\sigma,\chi](z)$ in the case of $\mathfrak{S}_1(X)$ and looks as follows:\footnote{Assuming that, for $u,u'\in X^*$ and $\varphi\in\mathcal{F}_2$, the condition $u\in\mathrm{dom}(\varphi)$ is taken for granted in statements of the form $u'\in\varphi(u)$.} $v\in[\sigma,\chi](z)$ iff $z$ and $v$ are the initial and the last term, respectively, of a finite sequence of elements of $\mathrm{dom}(\chi)$ such that
\begin{equation}\label{pth}
\chi(u)\setminus(X\cup\{\bm{o}\})\ne\varnothing,\ \ u'\in\sigma(u),
\end{equation}
whenever $u$ is a term of this sequence and $u'$ is its next term, and, additionally, 
\[\chi(v)\cap(X\cup\{\bm{o}\})\ne\varnothing.\]
The complications occur in the description of $\mathrm{dom}([\sigma,\chi])$. Let us call a {\em $\sigma,\chi$-path} any sequence (finite or infinite) of elements of $X^*$ such that, whenever $u$ is a term of this sequence and $u'$ is its next term, then $u\in\mathrm{dom}(\chi)\cap\mathrm{dom}(\sigma)$ and the conditions \eqref{pth} are satisfied. Then (see \cite[Remark~4 on p.~292]{skordev:ccs}) $\mathrm{dom}([\sigma,\chi])$ consists of the elements $z$ of $X^*$ which have the following two properties:
\begin{enumerate}[label=(\roman*)]
\item all $\sigma,\chi$-paths beginning at $z$ are finite;
\item whenever $u$ is a term of some $\sigma,\chi$-path beginning at $z$, then
\begin{equation}\label{regular}
u\in\mathrm{dom}(\chi)\,\&\,(\chi(u)\setminus(X\cup\{\bm{o}\})\ne\varnothing\Rightarrow u\in\mathrm{dom}(\sigma))
\end{equation}
\end{enumerate}
(clearly, we may restrict ourselves in (ii) to the last terms of the $\sigma,\chi$-paths beginning at $z$).

\begin{ex}\label{branching} 
Let $\beta$ be the function from $\mathcal{F}_2$ with domain $X^*$ such that $\beta(z)=\{\bm{o},(\bm{o},\bm{o})\}$ for any $z\in X^*$, and let $\sigma,\chi\in\mathcal{F}_2$ be defined as follows:
\[\sigma=\Sigma_2(\beta,L_2,R_2),\ \ \chi=\Sigma_2(I_2,\beta,I_2).\]
Let $\kappa=[\sigma,\chi]$. Then $\mathrm{dom}(\kappa)=X^*$ and
\[\kappa(z)=
\begin{cases}
\{z\}&\text{if }z\in X\cup\{\bm{o}\},\\
\{z\}\cup\kappa(L_1(z))\cup\kappa(R_1(z))&\text{otherwise}.
\end{cases}
\]
\end{ex}

One verifies that, for all $\sigma,\chi\in\mathcal{F}_1$, the iteration in $\mathfrak{S}_2(X)$ of $\tilde{\sigma}$ controlled by $\tilde{\chi}$ is the image under the mapping $\theta\mapsto\tilde{\theta}$ of the iteration in $\mathfrak{S}_1(X)$ of $\sigma$ controlled by $\chi$.

\subsection{Computability in iterative combinatory spaces}
Let an iterative combinatory space \eqref{cs} be given. By \cite[Definition~1 on p.~142]{skordev:ccs}, an element $\theta$ of $\mathcal{F}$ is said to be {\em $\mathfrak{S}$-computable in a subset $\mathcal{B}$} of $\mathcal{F}$ if $\theta$ can be obtained from $L,\,R,\,T,\,F$ and finitely many elements of $\mathcal{B}$ by means of multiplication in the semigroup $\mathcal{F}$ and the operations $\Pi$ and iteration\footnote{In fact, $T$ and $F$ are superfluous in the case of $\mathfrak{S}=\mathfrak{S}_1$ or $\mathfrak{S}=\mathfrak{S}_2$, since we have the equalities $T=L^3[L,L]$, $F=\Pi(T,T)$ in this case. Besides this, whenever the combinatory space $\mathfrak{S}$ is iterative, the following equalities hold for all $\chi,\varphi,\psi\in\mathcal{F}$:
\begin{equation}\label{reduct}
I=[\varphi,F],\ \Sigma(\chi,\varphi,\psi)=R[\widehat{F}\psi R,L]R[\widehat{F}^2\varphi R^2,L]\Pi(\chi,\widehat{T})
\end{equation}
with $\widehat{T}=\Pi(T,I)$ and $\widehat{F}=\Pi(F,I)$ (\cf~\cite[Example~1 on p.~74, Proposition~1 on p.~103 and Proposition~4 on p.~104]{skordev:ccs}), thus $I$ is $\mathfrak{S}$-computable in $\varnothing$, and the operation $\Sigma$ preserves $\mathfrak{S}$-computability in $\mathcal{B}$.\label{sf}}. According to \cite[Definition~2 on p.~143]{skordev:ccs}, a~mapping $\Gamma$ of $\mathcal{F}^n$ into $\mathcal{F}$ is said to be {\em $\mathfrak{S}$-computable in $\mathcal{B}$} if, for $\theta_1,\ldots,\theta_n\in\mathcal{F}$, the element $\Gamma(\theta_1,\ldots,\theta_n)$ is $\mathfrak{S}$-computable in $\mathcal{B}\cup\{\theta_1,\ldots,\theta_n\}$ uniformly with respect to $\theta_1,\ldots,\theta_n$.\footnote{For instance (by the second of the equalities \eqref{reduct}), the mapping $\Sigma$  of $\mathcal{F}^3$ into $\mathcal{F}$ is $\mathfrak{S}$-computable in~$\varnothing$.\label{compSigma}} Of course, if $\mathcal{B}\subseteq\mathcal{B}'\subseteq\mathcal{F}$, then $\mathfrak{S}$-computability in $\mathcal{B}$ implies $\mathfrak{S}$-computability in $\mathcal{B}'$.

As before, $X$ is assumed to be an arbitrary given set.

\begin{ex}
Let us use the notations of Example \ref{branching}. As seen from this example, the function $\kappa$ is $\mathfrak{S}_2(X)$-computable in the set~$\{\beta\}$. The mapping $\Gamma:\mathcal{F}_2^2\to\mathcal{F}_2$ defined by
\[\Gamma(\theta_1,\theta_2)=\uplambda z.\theta_1(z)\cup\theta_2(z)\]
(with $\mathrm{dom}(\Gamma(\theta_1,\theta_2))=\mathrm{dom}(\theta_1)\cap\mathrm{dom}(\theta_2)$) is also $\mathfrak{S}_2(X)$-computable in the set~$\{\beta\}$. This follows from the fact that
\[\Gamma(\theta_1,\theta_2)=\Sigma_2(\beta,\theta_1,\theta_2)\]
for all $\theta_1,\theta_2\in\mathcal{F}_2$.
\end{ex}

\begin{re}
If $\theta\in\mathcal{F}_2$ then the graph of $\theta$ is the set $\{(z,u)\,|\,z\in\mathrm{dom}(\theta)\,\&\,u\in\theta(z)\}$. A mapping of $\mathcal{F}_2^2$ into $\mathcal{F}_2$ also related to unions is the one which transforms any pair of functions from $\mathcal{F}_2$ into the function whose graph is the union of their graphs. This mapping, however, is not monotonically increasing with respect to the partial order in $\mathfrak{S}_2(X)$, therefore it is $\mathfrak{S}_2(X)$-computable in no subset of $\mathcal{F}_2$.
\end{re}

If $\mathcal{B}_1$ is a subset of $\mathcal{F}_1$, and $\mathcal{B}_2$ is the image of $\mathcal{B}_1$ under the mapping $\theta\mapsto\tilde{\theta}$, it is easy to see that, for any element $\theta$ of $\mathcal{F}_1$, the $\mathfrak{S}_1(X)$-computability of $\theta$ in $\mathcal{B}_1$ is equivalent to the $\mathfrak{S}_2(X)$-computability of $\tilde{\theta}$ in $\mathcal{B}_2$.

From \cite[Theorem~1 on pp.~192--193]{skordev:ccs} it is seen that, for any subset $\mathcal{B}$ of $\mathcal{F}_1$, the $\mathfrak{S}_1(X)$-computability in $\mathcal{B}$ of an element $\theta$ of $\mathcal{F}_1$ is equivalent to absolute prime computability of the function $\theta$ in some finite subset of $\mathcal{B}$.\footnote{For the definition of absolute prime computability, \cf~\cite{moschovakis:afoc} (to apply this definition to a function $\theta$ from $\mathcal{F}_1$, one must actually consider the partial multiple-valued function which transforms any element $z$ of $X^*$ into the one-element set $\{\theta(z)\}$ if $z\in\mathrm{dom}(\theta)$ and into the empty set otherwise).} The proof of this theorem makes use of \cite[Theorem~1 on p.~170]{skordev:ccs} (the First Recursion Theorem for iterative combinatory spaces). According to it, if $\Gamma_1,\ldots,\Gamma_l$ are mappings of $\mathcal{F}^l$ into $\mathcal{F}$ which are $\mathfrak{S}$-computable in a subset $\mathcal{B}$ of $\mathcal{F}$ then the system of inequalities
\begin{equation}\label{frt}
\theta_r\ge\Gamma_r(\theta_1,\ldots,\theta_l),\ \ r=1,\ldots,l,
\end{equation}
has a least solution, and the components of this solution are $\mathfrak{S}$-computable in~$\mathcal{B}$ (actually, the mentioned inequalities are satisfied as equalities by this solution). A parameterized version \cite[Theorem~2 on pp.~170--171]{skordev:ccs} of the theorem in question concerns systems of the form
\[\theta_r\ge\Gamma_r(\theta_1,\ldots,\theta_l,\theta_{l+1},\ldots,\theta_{l+m}),\ \ r=1,\ldots,l,\]
where $\Gamma_1,\ldots,\Gamma_l$ are mappings of $\mathcal{F}^{l+m}$ into $\mathcal{F}$ which are $\mathfrak{S}$-computable in~$\mathcal{B}$. The existence of mappings $\Delta_1,\ldots,\Delta_l$ of $\mathcal{F}^m$ into $\mathcal{F}$ $\mathfrak{S}$-computable in~$\mathcal{B}$ is asserted such that, for any choice of $\theta_{l+1},\ldots,\theta_{l+m}$ in~$\mathcal{F}$, the $l$-tuple $(\Delta_1(\theta_{l+1},\ldots,\theta_{l+m}),\ldots,\Delta_l(\theta_{l+1},\ldots,\theta_{l+m})$ is the least solution of the above system with respect to $\theta_1,\ldots,\theta_l$ (the inequalities are satisfied as equalities again). 

Here is an example of an application to $\mathfrak{S}_1(X)$ of the First Recursion Theorem for iterative combinatory spaces.  

\begin{ex} 
Let $\Gamma$ be the mapping $\theta\mapsto\Sigma_1(I_1,\Pi_1(\theta R_1,\theta L_1),I_1)$ of $\mathcal{F}_1$ into $\mathcal{F}_1$. Clearly this mapping is $\mathfrak{S}_1(X)$-computable in $\varnothing$. By the definition of the operations in $\mathcal{F}$, we have
\[\Gamma(\theta)(z)=
\begin{cases}
z&\text{\!if }z\in X\cup\{\bm{o}\},\\
(\theta(R_1(z)),\theta(L_1(z)))&\text{\!otherwise}
\end{cases}\]
for all $\theta\in\mathcal{F}$ and all $z\in X^*$. Using this, it is easy to see directly the existence of a unique fixed point of $\Gamma$ (this fixed point is the total function in $X^*$ which transforms any element of $X^*$ into its ``mirror-image''). By the First Recursion Theorem for iterative combinatory spaces, the fixed point of $\Gamma$ is $\mathfrak{S}_1(X)$-computable in $\varnothing$ (hence absolutely prime computable in~$\varnothing$).
\end{ex}

To conclude that the least fixed point in the above example is absolutely prime computable in $\varnothing$, one could of course use also the First Recursion Theorem for prime computability (for the details concerning it, see \cite[p.~460]{moschovakis:afoc}). The notion of absolute prime computability makes sense also for multi-valued functions. However, no obvious way is seen for applying the First Recursion Theorem for prime computability to problems about $\mathfrak{S}_2(X)$-computability, because the $\mathfrak{S}_2(X)$-computability of a function from $\mathcal{F}_2$ in some finite subset $\mathcal{B}$ of $\mathcal{F}_2$ can turn out to be not equivalent to absolute prime computability in $\mathcal{B}$ (thus $\mathfrak{S}_2(X)$-computability is an analog to absolute prime computability, but it is different from it).\footnote{Of course, when applying the definition of absolute prime computability to a function $\theta$ from $\mathcal{F}_2$ one must actually consider the extension of $\theta$ as a partial multiple-valued function whose value is empty everywhere in $X^*\setminus\mathrm{dom}(\theta)$.}

\begin{ex}
Let $\beta$ be the same as in Example \ref{branching}. Making use of K\H{o}nig's Lemma, one sees that the values of the functions $\mathfrak{S}_2(X)$-computable in the set $\{\beta\}$ are always finite sets. On the other hand, the function transforming all elements of $X^*$ into the infinite set $\mathbb{N}^*$ is absolutely prime computable in $\beta$ -- this can be shown by using \cite[Theorem~1 on pp.~192--193]{skordev:ccs} and the equality $\mathbb{N}^*=\iota(\bm{o})$, where $\iota$ is the  iteration of the function $z\mapsto(\bm{o},z)$ controlled by $\beta$ in the simpler sense relevant for the situation.\footnote{If $\sigma,\chi\in\mathcal{F}_2$ then this kind of iteration of $\sigma$ controlled by $\chi$ is the function $\iota\in\mathcal{F}_2$ defined as follows: 
\begin{enumerate}[label=(\roman*)]
\item $\mathrm{dom}(\iota)$ consists of the initial terms of the finite $\sigma,\chi$-paths whose last term belongs to the set
\[\{v\in\mathrm{dom}(\chi)\,|\,\chi(v)\cap(X\cup\{\bm{o}\})\ne\varnothing\};\]
\item for any $z\in\mathrm{dom}(\iota)$, the set $\iota(z)$ consists of the last terms of those of the above-mentioned sequences whose initial term is $z$.
\end{enumerate}
This is the iteration of $\sigma$ controlled by $\chi$ in an iterative combinatory space not essentially different from $\mathfrak{S}_m(\mathfrak{M}_X)$, where $\mathfrak{S}_m(\mathfrak{M}_B)$ is the combinatory space considered in the proof of the mentioned Theorem~1.} Hence absolute prime computability in $\beta$ of a unary function in $X^*$ does not imply its $\mathfrak{S}_2(X)$-computability in~$\beta$. The converse implication does not hold either. To show this, let us note that the notion of recursive enumerability makes sense for subsets of the closure of $\{\bm{o}\}$ under the pairing operation. It can be shown that, whenever a partial multiple-valued unary function $\theta$ in $X^*$ is absolutely prime computable in $\beta$, the intersection of $\{z\in X^*\,|\,\theta(z)\ne\varnothing\}$ with the closure in question is recursively enumerable. On the other hand, an $\mathfrak{S}_2(X)$-computable in $\beta$ function $\theta$ can be constructed such that $\mathrm{dom}(\theta)$ is a subset of this closure without being recursively enumerable.
\end{ex}

\section{Moschovakis extension of a represented space}\label{S:2}

\subsection{A general construction}
As in \cite[Definition~2.1]{brattka:cts}, a {\em representation} of a set is a partial mapping of $\mathbb{N}^\mathbb{N}$ onto this set (where, of course, $\mathbb{N}^\mathbb{N}$ is the set of all functions from $\mathbb{N}$ to $\mathbb{N}$), and a {\em represented space} is an ordered pair whose terms are a set and a representation of this set. %A {\em notation} of a set is a partial mapping of $\mathbb{N}$ onto this set.

Let $(X,\varrho)$ be a represented space, and let a Moschovakis extension $X^*$ of $X$ be constructed in the way from Section \ref{S:1}. We choose a computable bijection $J$ from~$\mathbb{N}^2$ to $\mathbb{N}$ such that the inequality $J(m,n)\ge\max(m,n)$ holds for all $m,n\in\mathbb{N}$, and we define a partial mapping $\varrho^*$ of $\mathbb{N}^\mathbb{N}$ into $X^*$ in the following inductive way, where $k$ ranges over $\mathbb{N}$:
\begin{enumerate}[label=(\roman*)]
\item For any $q\in\mathrm{dom}(\varrho)$, $\uplambda k.2q(k)+2\in\mathrm{dom}(\varrho^*)$ and 
$\varrho^*(\uplambda k.2q(k)+2)=\varrho(q)$.
\item $\uplambda k.0\in\mathrm{dom}(\varrho^*)$ and $\varrho^*(\uplambda k.0)=\bm{o}$.
\item If $q,r\in\mathrm{dom}(\varrho^*)$ then $\uplambda k.2J(q(k),r(k))+1\in\mathrm{dom}(\varrho^*)$ and
\[\varrho^*(\uplambda k.2J(q(k),r(k))+1)=(\varrho^*(q),\varrho^*(r)).\]
\end{enumerate}
Clearly the range of the partial mapping $\varrho^*$ is the whole $X^*$, hence $(X^*,\varrho^*)$ is a represented space. We will regard it as a Moschovakis extension of $(X,\varrho)$.

If the mapping $\varrho$ is injective then so is $\varrho^*$. On the other hand, $\varrho^*$ is surely not total (even in the case when $\varrho$ is total). For instance, $\varrho^*$ is not defined for the elements of $\mathbb{N}^\mathbb{N}$ which have both zero and nonzero values or have both even and odd values.

Of course, the representation $\varrho^*$ depends on the choice of the computable bijection~$J$. However, if $J'$ is an arbitrary computable bijection from~$\mathbb{N}^2$ to $\mathbb{N}$ such that we have $J'(m,n)\ge\max(m,n)$ for all $m,n\in\mathbb{N}$ then the corresponding representation ${\varrho^*}'$ is connected with $\varrho^*$ in a simple way, namely a computable bijection $h$ from $\mathbb{N}$ to $\mathbb{N}$ exists such that $\mathrm{dom}({\varrho^*}')=h^{-1}(\mathrm{dom}(\varrho^*))$ and, for any $p\in\mathrm{dom}({\varrho^*}')$, the equality ${\varrho^*}'(p)=\varrho^*(hp)$ holds (where, of course, $hp=\uplambda k.h(p(k))$). In fact, we may define $h$ in the following inductive way:
\[h(2l)=2l,\ \ h(2J'(m,n)+1)=2J(h(m),h(n))+1.\]

In the next two subsections, we will indicate two cases when properties of a represented space $(X,\varrho)$ are preserved by the formation of its Moschovakis extension $(X^*,\varrho^*)$, namely the case when $(X,\varrho)$ is generated by some effective topological space and the case when $(X,\varrho)$ is generated by some effective metric space.

\subsection{The case of represented space generated by an effective topological space}
An effective topological space can be regarded as an ordered pair $(X,\mathcal{U})$, where $X$ is a set, and $\mathcal{U}=\{\mathcal{U}_i\}_{i\in\mathbb{N}}$ is a base for a T$_0$ topology in $X$. In this situation, let us set $\mathcal{U}^{-1}(x)=\{i\in\mathbb{N}\,|\,x\in\mathcal{U}_i\}$ for any $x\in X$. The represented space corresponding to $(X,\mathcal{U})$ has as its second term the partial mapping $\varrho$ of $\mathbb{N}^\mathbb{N}$ into $X$ defined as follows: $\varrho(p)=x$ iff $\mathrm{rng}(p)=\mathcal{U}^{-1}(x)$. Let us define a family $\mathcal{U}^*=\{\mathcal{U}^*_j\}_{j\in\mathbb{N}}$ of subsets of $X^*$ by means of the equalities
\[\ \mathcal{U}^*_0=\{\bm{o}\},\ \mathcal{U}^*_{2i+2}=\mathcal{U}_i,
\ \mathcal{U}^*_{2J(m,n)+1}=\mathcal{U}^*_m\times\mathcal{U}^*_n.\]

\begin{pr}
The ordered pair $(X^*,\mathcal{U}^*)$ is an effective topological space, and $(X^*,\varrho^*)$ is the represented space generated by it. In addition, if the effective topological space $(X,\mathcal{U})$ is computable then the effective topological space $(X^*,\mathcal{U}^*)$ is also computable.
\end{pr}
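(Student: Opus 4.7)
My approach is structural induction on the depth of pairing in $X^*$, mirroring the three disjoint forms of indices in $\mathcal{U}^*$: $j=0$ (for $\{o\}$), $j=2i+2$ (for sets from $\mathcal{U}$), and $j=2J(m,n)+1$ (for products). Throughout I exploit the assumption that no element of $X\cup\{o\}$ is an ordered pair, so the ``type'' of $z\in X^*$ uniquely determines which of the three index forms can realize $z\in\mathcal{U}^*_j$.

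First I would verify that $(X^*,\mathcal{U}^*)$ is a base for a T$_0$ topology. Coverage is immediate by structural induction: $o\in\mathcal{U}^*_0$, each $x\in X$ lies in some $\mathcal{U}^*_{2i+2}$ because $\mathcal{U}$ is a base for $X$, and for $(u,v)$ with $u\in\mathcal{U}^*_m$, $v\in\mathcal{U}^*_n$ we have $(u,v)\in\mathcal{U}^*_{2J(m,n)+1}$. For the base-intersection property, given $z\in\mathcal{U}^*_j\cap\mathcal{U}^*_{j'}$, the type of $z$ forces $j,j'$ to have matching forms; the $z=o$ case is trivial, the $z\in X$ case reduces to the corresponding property of $\mathcal{U}$, and the pair case applies the inductive hypothesis to each component and recombines through $J$. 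The T$_0$ property is handled by the same case split: points of different type are separated by any base set whose index form matches only one of them; two points of $X$ are separated by T$_0$ of $\mathcal{U}$; two pairs differ in some component, to which I apply induction, combined with coverage of the other component.

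Next I would show that $\rho^*(p)=z$ iff $\mathrm{rng}(p)=(\mathcal{U}^*)^{-1}(z)$, thereby identifying $(X^*,\rho^*)$ with the represented space generated by $(X^*,\mathcal{U}^*)$. The forward direction is by induction on the derivation of $\rho^*(p)=z$ from clauses (i)--(iii): each clause produces a $p$ whose range I compute directly and identify with $(\mathcal{U}^*)^{-1}(z)$, using disjointness of index forms to see that there are no other base sets containing $z$. The converse is by structural induction on $z$: values in $\mathrm{rng}(p)$ reveal the type of $z$, and one recovers a $\rho$-name (for $z\in X$, via $q(k)=(p(k)-2)/2$) or two sub-names (for $z=(u,v)$, via the inverse of $J$ applied to $(p(k)-1)/2$) whose ranges are exactly $(\mathcal{U}^*)^{-1}(u)$ and $(\mathcal{U}^*)^{-1}(v)$, after which the inductive hypothesis and the appropriate clause conclude.

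For the computability claim, the passage from $\mathcal{U}$-indices to $\mathcal{U}^*$-indices uses only the computable operations $0$, $i\mapsto 2i+2$, $(m,n)\mapsto 2J(m,n)+1$, and the computable inverse of $J$; the r.e.\ data witnessing computability of $(X,\mathcal{U})$ lift to $\mathcal{U}^*$ by branching on the form of each index and recursively reducing pair indices to their components. I expect the main obstacle to be the converse direction in the representation claim for $z=(u,v)$: verifying that the two projections of a name of the pair really enumerate the full index sets $(\mathcal{U}^*)^{-1}(u)$ and $(\mathcal{U}^*)^{-1}(v)$ hinges on the full-product form of $(\mathcal{U}^*)^{-1}((u,v))$ together with nonemptiness of each component's index set, which is secured by coverage in the first step.
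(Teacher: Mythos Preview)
Your overall plan mirrors the paper's proof closely: structural induction on $X^*$ for coverage, the base-intersection property, T$_0$, the identification of $\rho^*$ with the standard representation $\sigma$ of $(X^*,\mathcal{U}^*)$, and the inductive construction of the r.e.\ witness for computability. For the first, second, third, and last of these your sketch is correct and essentially identical to the paper's argument.

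There is, however, a genuine gap in the forward direction of your representation claim when $z=(u,v)$. You propose to show that if $\rho^*(p)=z$ via clause~(iii), so that $p=\uplambda k.\,2J(q(k),r(k))+1$ with $\rho^*(q)=u$ and $\rho^*(r)=v$, then $\mathrm{rng}(p)=(\mathcal{U}^*)^{-1}(z)=\{2J(m,n)+1:m\in(\mathcal{U}^*)^{-1}(u),\ n\in(\mathcal{U}^*)^{-1}(v)\}$. But $\mathrm{rng}(p)$ contains only the values $2J(q(k),r(k))+1$ for a \emph{common} index $k$, and these need not exhaust the full product $\mathrm{rng}(q)\times\mathrm{rng}(r)$. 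Concretely, take $x\in X$ with exactly two base indices, say $(\mathcal{U}^*)^{-1}(x)=\{2,4\}$, and let $q=r$ alternate between $2$ and $4$; then $\rho^*(p)=(x,x)$ by clause~(iii), yet $\mathrm{rng}(p)=\{2J(2,2)+1,\,2J(4,4)+1\}$ misses $2J(2,4)+1$, so $p\notin\mathrm{dom}(\sigma)$. Thus $\rho^*\ne\sigma$ literally, and the equivalence you aim to prove fails in this direction; the paper's displayed set equality for the pair case has the same defect. What does hold, and is all that the subsequent sections need, is that $\sigma$ and $\rho^*$ are computably equivalent: your converse direction already yields $\sigma\le\rho^*$, and for $\rho^*\le\sigma$ one can replace the diagonal combination in clause~(iii) by a computable dovetailing that enumerates all of $\mathrm{rng}(q)\times\mathrm{rng}(r)$.
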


\begin{proof}
The statement that $(X^*,\mathcal{U}^*)$ is an effective topological space means that $\mathcal{U}^*$ is a base for a T$_0$ topology in $X^*$.

Let $Z_0=\bigcup_{i=0}^\infty\mathcal{U}^*_i$. We will show that $Z_0=X^*$ by showing that $X\cup\{\bm{o}\}\subseteq Z_0$ and $Z_0$ is closed under formation of ordered pairs. Indeed, if $z\in X$ then $z\in\mathcal{U}_i=\mathcal{U}^*_{2i+2}\subseteq Z_0$ for some $i$, the element $\bm{o}$ also belongs to $Z_0$ because $\bm{o}\in\mathcal{U}^*_0\subseteq Z_0$, and if $u,v\in Z_0$ then $u\in\mathcal{U}^*_m$ and $v\in\mathcal{U}^*_n$ for some $m$ and $n$, therefore $(u,v)\in\mathcal{U}^*_{2J(m,n)+1}\subseteq Z_0$. 

The next thing we have to show is the following statement: if $z\in\mathcal{U}^*_{i_1}\cap\mathcal{U}^*_{i_2}$ then there is some $i$ such that $z\in\mathcal{U}^*_i\subseteq\mathcal{U}^*_{i_1}\cap\mathcal{U}^*_{i_2}$. Let $Z_1$ be the set of all elements $z$ of $X^*$ such that the above implication holds for all $i_1,i_2$. We will show that $Z_1=X^*$ by showing that $X\cup\{\bm{o}\}\subseteq Z_1$ and $Z_1$ is closed under formation of ordered pairs. Indeed, if $z\in X$ and $z\in\mathcal{U}^*_{i_1}\cap\mathcal{U}^*_{i_2}$ then $\mathcal{U}^*_{i_1}=\mathcal{U}_{l_1}$, $\mathcal{U}^*_{i_2}=\mathcal{U}_{l_2}$ for some $l_1,l_2$, hence $z\in\mathcal{U}_{l_1}\cap\mathcal{U}_{l_2}$, therefore we have $z\in\mathcal{U}_l\subseteq\mathcal{U}_{l_1}\cap\mathcal{U}_{l_2}$ for some $l$, {\ie} $z\in\mathcal{U}^*_i\subseteq\mathcal{U}^*_{i_1}\cap\mathcal{U}^*_{i_2}$ with $i=2l+2$. If $\bm{o}\in\mathcal{U}^*_{i_1}\cap\mathcal{U}^*_{i_2}$ then $i_1=i_2=0$, hence $\bm{o}\in\mathcal{U}^*_i\subseteq\mathcal{U}^*_{i_1}\cap\mathcal{U}^*_{i_2}$ with $i=0$. Thus $X\cup\{\bm{o}\}\subseteq Z_1$. Suppose now that $u,v\in Z_1$ and $(u,v)\in\mathcal{U}^*_{i_1}\cap\mathcal{U}^*_{i_2}$. Then $\mathcal{U}^*_{i_1}=\mathcal{U}^*_{m_1}\times\mathcal{U}^*_{n_1}$, $\mathcal{U}^*_{i_2}=\mathcal{U}^*_{m_2}\times\mathcal{U}^*_{n_2}$ for some $m_1,n_1,m_2,n_2$. Clearly $u\in\mathcal{U}^*_{m_1}\cap\mathcal{U}^*_{m_2}$ and $v\in\mathcal{U}^*_{n_1}\cap\mathcal{U}^*_{n_2}$, therefore $u\in\mathcal{U}^*_m\subseteq\mathcal{U}^*_{m_1}\cap\mathcal{U}^*_{m_2}$ and $v\in\mathcal{U}^*_n\subseteq\mathcal{U}^*_{n_1}\cap\mathcal{U}^*_{n_2}$ for some $m,n$, hence $(u,v)\in\mathcal{U}^*_i\subseteq\mathcal{U}^*_{i_1}\cap\mathcal{U}^*_{i_2}$ with $i=2J(m,n)+1$.

Up to here, we showed that $\mathcal{U}^*$ is a base for some topology in $X^*$. We have to show that the topology in question is a T$_0$ one. We will achieve this goal by showing that any $z\in X^*$ has the following property: whenever $z'\in X^*$ and $z'\ne z$, then there is some $i$ such that $\mathcal{U}^*_i$ contains one of $z$ and $z'$ but not the other. Let $Z_2$ be the set of the elements $z$ of $X^*$ with this property. The elements of $X$ belong to $Z_2$. Indeed, suppose $z\in X$, $z'\in X^*$ and $z'\ne z$. If $z'\in X$, there is some $i$ such that $\mathcal{U}_i$ contains one of $z$ and $z'$ but not the other, and if $z'\not\in X$ then no $\mathcal{U}_i$ contains $z'$ and therefore some $\mathcal{U}_i$ contains $z$ but not $z'$. In both cases there is some $i$ such that $\mathcal{U}^*_{2i+2}$ contains one of $z$ and $z'$ but not the other. The element $\bm{o}$ also belongs to $Z_2$ because $\mathcal{U}^*_0$ contains $\bm{o}$, but does not contain other elements of $X^*$. So $X\cup\{\bm{o}\}\subseteq Z_2$. Suppose now $u,v\in Z_2$, $z'\in X^*$ and $z'\ne(u,v)$. If $z'\in X\cup\{\bm{o}\}$ then $z'\in Z_2$ and therefore there is some $i$ such that $\mathcal{U}^*_i$ contains one of $z'$ and $(u,v)$ but not the other.\footnote{Actually the stronger statement holds that some $\mathcal{U}^*_i$ contains $z'$ but does not contain $(u,v)$.} Let $z'\not\in X\cup\{\bm{o}\}$. Then $z'=(u',v')$ for some $u',v'\in X^*$, and some of the inequalities $u'\ne u$, $v'\ne v$ holds. Suppose $u'\ne u$. Then there is some $m$ such that $\mathcal{U}^*_m$ contains one of $u$ and $u'$ but not the other. If $u\in\mathcal{U}^*_m$ then we consider some $n$ such that $v\in\mathcal{U}^*_n$, and we observe that $\mathcal{U}^*_{2J(m,n)+1}$ contains $(u,v)$ but does not contain $z'$. If $u'\in\mathcal{U}^*_m$ then we consider some $n$ such that $v'\in\mathcal{U}^*_n$, and we observe that $\mathcal{U}^*_{2J(m,n)+1}$ contains $z'$ but does not contain $(u,v)$. The reasoning in the case $v'\ne v$ is similar.

So far, we have established that $(X^*,\mathcal{U}^*)$ is an effective topological space. As such one, it generates a representation $\sigma$ of its carrier $X^*$. By definition, $\sigma$ is the partial mapping of $\mathbb{N}^\mathbb{N}$ onto $X^*$ defined by the equivalence
\[\sigma(p)=z\leftrightarrow p\in\mathbb{N}^\mathbb{N}\,\&\,\mathrm{rng}(p)={\mathcal{U}^*}^{-1}(z).\]
We will prove that $\sigma=\varrho^*$. To this end, we will consider the set $Z_3$ of the elements $z$ of $X^*$ such that $\sigma(p)=z\Leftrightarrow\varrho^*(p)=z$ for all $p\in\mathbb{N}^\mathbb{N}$, and we will show that $Z_3=X^*$. If $z\in X$ then ${\mathcal{U}^*}^{-1}(z)=\left\{2i+2\,|\,i\in\mathcal{U}^{-1}(z)\right\}$, hence
\begin{multline*}
\left\{p\in\mathbb{N}^\mathbb{N}\,\left|\,\mathrm{rng}(p)={\mathcal{U}^*}^{-1}(z)\right.\right\}=\left\{\uplambda k.2q(k)+2\,\left|\,q\in\mathbb{N}^\mathbb{N}\,\&\,\mathrm{rng}(q)=\mathcal{U}^{-1}(z)\right.\right\}\\
=\{\uplambda k.2q(k)+2\,|\,\varrho(q)=z\}=\{p\,|\,\varrho^*(p)=z\},
\end{multline*}
thus $z\in Z_3$. The element $\bm{o}$ also belongs to $Z_3$ because ${\mathcal{U}^*}^{-1}(\bm{o})=\{0\}$, hence
\[\left\{p\in\mathbb{N}^\mathbb{N}\,\left|\,\mathrm{rng}(p)={\mathcal{U}^*}^{-1}(\bm{o})\right.\right\}=\{\uplambda k.0\}=\{p\,|\,\varrho^*(p)=\bm{o}\}.\]
Let now $z=(u,v)$ with $u,v\in Z_3$. Then
\[{\mathcal{U}^*}^{-1}(z)=\left\{2J(m,n)+1\,|\,m\in{\mathcal{U}^*}^{-1}(u),\,n\in{\mathcal{U}^*}^{-1}(v)\right\},\]
hence
\begin{multline*}
\left\{p\in\mathbb{N}^\mathbb{N}\,\left|\,\mathrm{rng}(p)={\mathcal{U}^*}^{-1}(z)\right.\right\}\\
=\left\{\uplambda k.2J(q(k),r(k))+1\,\left|\,q,r\in\mathbb{N}^\mathbb{N}\,\&\,\mathrm{rng}(q)
=\mathcal{U}^{-1}(u)\,\&\,\mathrm{rng}(r)=\mathcal{U}^{-1}(v)\right.\right\}\\
=\{\uplambda k.2J(q(k),r(k))+1\,|\,\varrho^*(q)=u\,\&\,\varrho^*(r)=v\}=\{p\,|\,\varrho^*(p)=z\},
\end{multline*}
thus $z\in Z_3$.

Suppose now that the effective topological space $(X,\mathcal{U})$ is computable. Then a recursively enumerable subset $S$ of $\mathbb{N}^3$ exists such that 
\[\mathcal{U}_{i_1}\cap\mathcal{U}_{i_2}=\bigcup\{\mathcal{U}_i\,|\,(i_1,i_2,i)\in S\}\]
for all $i_1,i_2\in\mathbb{N}$. We define $S^*\subseteq\mathbb{N}^3$ in the following inductive way:
\begin{enumerate}[label=(\roman*)]
\item $(2i_1+2,2i_2+2,2i+2)\in S^*$, whenever $(i_1,i_2,i)\in S$.
\item $(0,0,0)\in S^*$.
\item If $(m_1,m_2,m)\in S^*$ and $(n_1,n_2,n)\in S^*$ then
\[(2J(m_1,n_1)+1,2J(m_2,n_2)+1,2J(m,n)+1)\in S^*.\]
\end{enumerate}
The set $S^*$ is recursively enumerable and 
\[\mathcal{U}^*_{i_1}\cap\mathcal{U}^*_{i_2}=\bigcup\{\,\mathcal{U}^*_i\,|\,(i_1,i_2,i)\in S^*\}\]
for all $i_1,i_2\in\mathbb{N}$, hence the effective topological space $(X^*,\mathcal{U}^*)$ is computable.
\end{proof}

\subsection{The case of represented space generated by an effective metric space} 
The notion of effective metric space will be understood as follows: this is a triple $(X,d,\alpha)$, where $(X,d)$ is a metric space, $\alpha$ is a mapping of $\mathbb{N}$ into $X$ and the set $\mathrm{rng}(\alpha)$ is dense in $(X,d)$. The represented space generated by $(X,d,\alpha)$ has as its second term the partial mapping $\varrho$ of $\mathbb{N}^\mathbb{N}$ into $X$ defined as follows: $\varrho(p)=x$ iff $d(\alpha(p(k)),x)<2^{-k}$ for all $k\in\mathbb{N}$. We will define inductively $d^*:X^*\times X^*\to\mathbb{R}$ and $\alpha^*:\mathbb{N}\to X^*$. The function $d^*:X^*\times X^*\to\mathbb{R}$ is defined as follows:
\begin{enumerate}[label=(\roman*)]
\item $d^*(x,x')=\min(d(x,x'),1)$ for all $x,x'\in X$.
\item $d^*(\bm{o},\bm{o})=0$.
\item $d^*(\bm{o},x)=d^*(x,\bm{o})=1$ for all $x\in X$.
\item $d^*(z,z')=d^*(z',z)=1$ whenever $z\in X\cup\{\bm{o}\}$, $z'\in X^*\setminus(X\cup\{\bm{o}\})$.
\item $d^*(z,z')=\max(d^*(L(z),L(z')),d^*(R(z),R(z')))$ for all $z,z'\in X^*\setminus(X\cup\{\bm{o}\})$.
\end{enumerate}
The definition of the mapping $\alpha^*:\mathbb{N}\to X^*$ is the following one:
\begin{enumerate}[label=(\roman*)]
\item $\alpha^*(2i+2)=\alpha(i)$ for all $i\in\mathbb{N}$.
\item $\alpha^*(0)=\bm{o}$.
\item $\alpha^*(2J(m,n)+1)=(\alpha^*(m),\alpha^*(n))$ for all $m,n\in\mathbb{N}$.
\end{enumerate}

\begin{pr}
The triple $(X^*,d^*,\alpha^*)$ is an effective metric space, and $(X^*,\varrho^*)$ is the represented space generated by it. In addition, if the effective metric space $(X,d,\alpha)$ is semicomputable or computable then the effective metric space $(X^*,d^*,\alpha^*)$ is, respectively, also semicomputable or computable.
\end{pr}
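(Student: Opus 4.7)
The plan has four ingredients: verifying that $(X^*,d^*)$ is a metric, that $\mathrm{rng}(\alpha^*)$ is dense in it, that the representation generated by $(X^*,d^*,\alpha^*)$ coincides with $\rho^*$, and that effectivity is inherited.

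For the metric-space axioms, nonnegativity is immediate and symmetry is visible directly from clauses (i)--(v). The identities $d^*(z,z)=0$ and $d^*(z,z')=0\Rightarrow z=z'$ I would prove by a simultaneous structural induction on $X^*$, using injectivity of pairing together with the observation that any two elements of $X^*$ lying on different sides of the partition into $X$, $\{o\}$, and $X^*\setminus(X\cup\{o\})$ are at distance~$1$ by clauses (iii)--(iv). The main obstacle is the triangle inequality $d^*(z_1,z_3)\le d^*(z_1,z_2)+d^*(z_2,z_3)$; I would establish it by induction on the maximum of the syntactic depths of $z_1,z_2,z_3$. Whenever $z_2$ is of a different type from $z_1$, or from $z_3$, one of the two summands on the right equals~$1$, so the universal bound $d^*\le 1$ finishes the case; the same remark handles the case in which $z_1$ and $z_3$ are of different types. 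The case when all three are pairs reduces by clause~(v) to the components, where the induction hypothesis applies; the case when all three lie in $X$ reduces to the well-known fact that truncating a metric to $[0,1]$ via $\min(\cdot,1)$ preserves the triangle inequality.

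Density follows by structural induction on $z\in X^*$: for $z\in X$ the density of $\mathrm{rng}(\alpha)$ yields $i_k$ with $d(\alpha(i_k),z)\to 0$, hence $d^*(\alpha^*(2i_k+2),z)\to 0$; the element $o$ is $\alpha^*(0)$; and for $z=(u,v)$, combining sequences $\alpha^*(m_k)\to u$ and $\alpha^*(n_k)\to v$ yields $\alpha^*(2J(m_k,n_k)+1)=(\alpha^*(m_k),\alpha^*(n_k))\to(u,v)$ via clause~(v). The identification of the generated representation with $\rho^*$ is then carried out by structural induction on the target $z$ in both directions. The forward implication, that $\rho^*(p)=z$ entails $d^*(\alpha^*(p(k)),z)<2^{-k}$ for all $k$, is a direct unwinding of the inductive clauses defining $\rho^*$, $\alpha^*$, and $d^*$. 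For the converse, the key observation is that $d^*(\alpha^*(p(k)),z)<2^{-k}\le 1$ forces $\alpha^*(p(k))$ to be of the same type as $z$; this pins down the shape of $p(k)$ and, when $z$ is a pair, extracts two component sequences to which the induction hypothesis applies.

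Finally, for preservation of semicomputability and computability, observe that $d^*(\alpha^*(m),\alpha^*(n))$ satisfies an effective recursion on $(m,n)$: a case analysis on the parities of $m$ and $n$, together with the inverse of $J$, yields either a constant from $\{0,1\}$, a lookup of $\min(d(\alpha(i),\alpha(j)),1)$, or a value $\max\bigl(d^*(\alpha^*(m_1),\alpha^*(m_2)),\,d^*(\alpha^*(n_1),\alpha^*(n_2))\bigr)$ in which all recursive indices are strictly smaller than $\max(m,n)$, by virtue of $J(m',n')\ge\max(m',n')$. Since the operations $\min(\cdot,1)$ and $\max$ and the constants $0$ and $1$ preserve both upper semicomputability and computability of real-valued functions of natural-number arguments, the effectivity hypothesis on $(X,d,\alpha)$ propagates directly to $(X^*,d^*,\alpha^*)$.
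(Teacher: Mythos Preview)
Your proposal is correct and follows essentially the same route as the paper: structural induction on $X^*$ for each of the four ingredients. The only presentational differences are that the paper proves the triangle inequality by inducting on the middle point alone (using the observation that a failure forces both outer points to share its type), whereas you induct on the maximum depth of all three, and that the paper spells out explicit inductive definitions of the sets $H^*$ and the function $\delta^*$ witnessing semicomputability and computability, whereas you phrase this as an effective recursion preserved by $\min(\cdot,1)$ and $\max$; both variants are straightforwardly equivalent.
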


\begin{proof}
Let $Z_0$ be the set of the elements $z$ of $X^*$ such that
\begin{equation}\label{basic}
0\le d^*(z,z')=d^*(z',z)\le 1,\ d^*(z,z')=0\Leftrightarrow z=z'
\end{equation}
for all $z'\in X^*$. Obviously $X\cup\{\bm{o}\}\subseteq Z_0$ and $Z_0$ is closed with respect to formation of ordered pairs. Hence $Z_0=X^*$, {\ie} we have \eqref{basic} for all $z,z'\in X^*$. To prove that
\begin{equation}\label{triangle}
d^*(z',z'')\le d^*(z',z)+d^*(z,z'')
\end{equation}
for all $z,z',z''\in X^*$, we consider the set $Z_1$ of the elements $z$ of $X^*$ such that the inequality \eqref{triangle} holds for all $z',z''\in X^*$. We will show that $Z_1=X^*$ by showing that $X\cup\{\bm{o}\}\subseteq Z_1$ and $Z_1$ is closed with respect to formation of ordered pairs. Both statements will be proved by using that the negation of \eqref{triangle} implies the inequalities 
\begin{equation}\label{small}
d^*(z',z)<1,\ d^*(z'',z)<1
\end{equation}
(since $d^*(z',z'')\le 1$, $d^*(z',z)\ge 0$, $d^*(z,z'')\ge 0$). Let $z\in X\cup\{\bm{o}\}$, and suppose the inequality \eqref{triangle} is violated for some $z',z''\in X^*$, hence we have the inequalities \eqref{small} for these $z',z''$. If $z\in X$ these inequalities entail that $z'$ and $z''$ also belong to $X$, hence the negation of \eqref{triangle} turns into the inequality
\[\min(d(z',z''),1)>\min(d(z',z),1)+\min(d(z,z''),1);\]
thus
\begin{gather*}
d(z',z'')>\min(d(z',z),1)+\min(d(z,z''),1),\\
1>\min(d(z',z),1)+\min(d(z,z''),1).
\end{gather*}
Due to the nonnegativity of $\min(d(z',z),1)$ and $\min(d(z,z''),1)$, the second of these inequalities is possible only if
\[\min(d(z',z),1)=d(z',z),\ \min(d(z,z''),1)=d(z,z''),\]
but this, together with the first of them, contradicts the inequality
\[d(z',z'')\le d(z',z)+d(z,z'').\]
If $z=\bm{o}$ then the inequalities \eqref{small} entail $z'=z''=\bm{o}$, and the negation of \eqref{triangle} turns into the false inequality $0>0+0$. Let now $z=(u,v)$, where $u$ and $v$ belong to $Z_1$. We will show that $z$ belongs to $Z_1$ too. Suppose the inequality \eqref{triangle} is violated for some $z',z''\in X^*$, hence we have the inequalities \eqref{small} for these $z',z''$. Since $z\in X^*\setminus(X\cup\{\bm{o}\})$, it follows from these inequalities that $z'$ and $z''$ also belong to $X^*\setminus(X\cup\{\bm{o}\})$, hence $z'=(u',v')$, $z''=(u'',v'')$ for some $u',v',u'',v''\in X^*$. Then
\begin{gather*}
d^*(z',z'')=\max(d^*(u',u''),d^*(v',v'')),\\
d^*(z',z)=\max(d^*(u',u),d^*(v',v)),\\
d^*(z,z'')=\max(d^*(u,u''),d^*(v,v'')),\\
d^*(u',u'')\le d^*(u',u)+d^*(u,u'')\le d^*(z',z)+d^*(z,z''),\\
d^*(v',v'')\le d^*(v',v)+d^*(v,v'')\le d^*(z',z)+d^*(z,z''),
\end{gather*}
hence we may conclude that \eqref{triangle} holds and thus get a contradiction.

After we proved above that $(X^*,d^*)$ is a metric space, we will now show that $\mathrm{rng}(\alpha^*)$ is dense in this metric space. Let $\varepsilon>0$, and let $Z^\varepsilon$ be the set of the elements $z$ of $X^*$ such that $d^*(\alpha^*(k),z)<\varepsilon$ for some $k\in\mathbb{N}$. We will show that $Z^\varepsilon=X^*$ by showing that $X\cup\{\bm{o}\}\subseteq Z^\varepsilon$ and $Z^\varepsilon$ is closed with respect to formation of ordered pairs. Any $z\in X$ belongs to $Z^\varepsilon$ because \[d^*(\alpha^*(2i+2),z)=d^*(\alpha(i),z)=\min(d(\alpha(i),z),1)<\varepsilon\]
for any $i\in\mathbb{N}$ such that $d(\alpha(i),z)<\varepsilon$. The element $\bm{o}$ also belongs to $Z^\varepsilon$ because
\[d^*(\alpha^*(0),\bm{o})=d^*(\bm{o},\bm{o})=0.\]
Suppose now that $z=(u,v)$, where $u$ and $v$ belong to $Z^\varepsilon$. There are $m,n\in\mathbb{N}$ such that $d^*(\alpha^*(m),u)<\varepsilon$ and $d^*(\alpha^*(n),v)<\varepsilon$. Then
\[d^*(\alpha^*(2J(m,n)+1),z)=d^*((\alpha^*(m),\alpha^*(n)),(u,v))=\max(d^*(\alpha^*(m),u),d^*(\alpha^*(n),v))<\varepsilon.\]
Hence $z\in Z^\varepsilon$.

Let now $(X^*,\sigma)$ be the represented space corresponding to the effective metric space $(X^*,d^*,\alpha^*)$, {\ie} $\sigma$ is the partial mapping of $\mathbb{N}^\mathbb{N}$ onto $X^*$ defined by the equivalence
\[\sigma(p)=z\Leftrightarrow\forall k\in\mathbb{N}\left(d^*(\alpha^*(p(k)),z)<2^{-k}\right).\]
We will show that $\sigma=\varrho^*$, where $\varrho$ is the second term of the represented space corresponding to the metric space $(X,d,\alpha)$. Let $Z_2$ be the set of the elements $z$ of $X^*$ such that
\begin{equation}\label{equiv}
\sigma(p)=z\Leftrightarrow\varrho^*(p)=z
\end{equation}
for any $p\in\mathbb{N}^\mathbb{N}$. We will show that $Z_2=X^*$ by showing that $X\cup\{\bm{o}\}\subseteq Z_2$ and $Z_2$ is closed with respect to formation of ordered pairs. Let $z\in X$. Then the inequality $d^*(z',z)<2^{-k}$, where $z'\in X^*$, $k\in\mathbb{N}$, can be satisfied only in the case of $z'\in X$ (since $2^{-k}\le 1$). Therefore the equality $\sigma(p)=z$ can be satisfied only if $\alpha^*(p(k))\in X$ for all $k\in\mathbb{N}$, {\ie} if all $p(k)$ are positive even numbers. In such a case,
\[d^*(\alpha^*(p(k)),z)=d^*(\alpha(p(k)/2-1),z)=\min(d(\alpha(p(k)/2-1),z),1),\]
hence
\[d^*(\alpha^*(p(k)),z)<2^{-k}\Leftrightarrow d(\alpha(p(k)/2-1),z)<2^{-k}.\]
Therefore the equality $\sigma(p)=z$ holds iff $p(k)$ is a positive even number for any $k\in\mathbb{N}$ and
\begin{equation}\label{cauchy}
\forall k\in\mathbb{N}\left(d(\alpha(p(k)/2-1),z)<2^{-k})\right).
\end{equation}
On the other hand, the equality $\varrho^*(p)=z$ holds iff $p(k)$ is a positive even number for any $k\in\mathbb{N}$ and $\varrho(\uplambda k.p(k)/2-1)=z$. By the definition of $\varrho$, the last equality is equivalent to the condition \eqref{cauchy}, and thus we see that $z\in Z_2$.
The element $\bm{o}$ also belongs to $Z_2$, because in the case $z=\bm{o}$ each of the sides of the equivalence \eqref{equiv} is satisfied iff $p(k)=0$ for all $k\in\mathbb{N}$. Suppose now that $z=(u,v)$, where $u,v\in Z_2$. Then the inequality $d^*(z',z)<2^{-k}$, where $z'\in X^*$, $k\in\mathbb{N}$, can be satisfied only in the case of $z'\in X^*\setminus(X\cup\{\bm{o}\})$. Therefore the equality $\sigma(p)=z$ can be satisfied only if $\alpha^*(p(k))\in X^*\setminus(X\cup\{\bm{o}\})$ for any $k\in\mathbb{N}$, {\ie} if all $p(k)$ are positive odd numbers. Any positive odd number has a unique representation in the form $2J(m,n)+1$ with $m,n\in\mathbb{N}$, hence any function in $\mathbb{N}^\mathbb{N}$ with positive odd values can be uniquely represented in the form
\begin{equation}\label{seqofodd}
\uplambda k.2J(q(k),r(k))+1
\end{equation}
with $q,r\in\mathbb{N}^\mathbb{N}$. For any two such $q,r$ and any $k\in\mathbb{N}$, we have
\[d^*(\alpha^*(2J(q(k),r(k))+1),z)=\max(d^*(\alpha^*(q(k)),u),d^*(\alpha^*(r(k)),v)).\]
Therefore
\[d^*(\alpha^*(2J(q(k),r(k))+1),z)<2^{-k}\Leftrightarrow d^*(\alpha^*(p(k)),u)<2^{-k}\ \&\ d^*(\alpha^*(q(k)),v))<2^{-k}\]
for any $k\in\mathbb{N}$, hence
\begin{multline*}
\sigma(\uplambda k.2J(q(k),r(k))+1)=z\Leftrightarrow\sigma(q)=u\ \&\ \sigma(r)=v\\
\Leftrightarrow\varrho^*(q)=u\ \&\ \varrho^*(r)=v\Leftrightarrow\varrho^*(\uplambda k.2J(q(k),r(k))+1)=z.
\end{multline*}
Since the value of $\varrho^*$ can be equal to $z$ only at functions of the form \eqref{seqofodd}, it is thus shown that $z\in Z_2$.

Suppose now $(X,d,\alpha)$ is semicomputable. Then the set 
\[H=\left\{(i,j,k,l)\in\mathbb{N}^4\,\bigg|\,d(\alpha(i),\alpha(j))<\frac{k+1}{l+1}\right\}\]
is recursively enumerable. We will show that $(X^*,d^*,\alpha^*)$ is semicomputable by proving the recursive enumerability of the set
\[H^*=\left\{(i,j,k,l)\in\mathbb{N}^4\,\bigg|\,d^*(\alpha^*(i),\alpha^*(j))<\frac{k+1}{l+1}\right\}.\]
To this end, it is sufficient to indicate that $H^*$ can be defined inductively as follows (with $i,j,k,l,m,m',n,n'$ ranging over $\mathbb{N}$):
\begin{enumerate}[label=(\roman*)]
\item If $k>l$ then $(i,j,k,l)\in H^*$.
\item If $(i,j,k,l)\in H$ then $(2i+2,2j+2,k,l)\in H^*$.
\item $(0,0,k,l)\in H^*$.
\item If $(m,m',k,l)\in H^*$ and $(n,n',k,l)\in H^*$ then $(2J(m,n)+1,2J(m',n')+1,k,l)\in H^*$.
\end{enumerate}

Suppose $(X,d,\alpha)$ is computable. Then a computable function $\delta:\mathbb{N}^3\to\mathbb{Q}$ exists such that
\[|\delta(i,j,k)-d(\alpha(i),\alpha(j))|<2^{-k}\]
for all $i,j,k\in\mathbb{N}$. We define a function $\delta^*:\mathbb{N}^3\to\mathbb{Q}$ by means of the following inductive definition, where $i,j,k,m,n,m',n'$ range over $\mathbb{N}$:
\begin{enumerate}[label=(\roman*)]
\item $\delta^*(2i+2,2j+2,k)=\delta(i,j,k)$.
\item $\delta^*(0,0,k)=0$.
\item $\delta^*(0,2i+2,k)=\delta^*(2i+2,0,k)=1$.
\item $\delta^*(2i,2j+1,k)=\delta^*(2j+1,2i,k)=1$.
\item $\delta^*(2J(m,n)+1,2J(m',n')+1,k)=\max(\delta^*(m,m',k),\delta^*(n,n',k))$. 
\end{enumerate}
Then $\delta^*$ is a computable function from $\mathbb{N}^3$ to $\mathbb{Q}$ such that
\[|\delta^*(i,j,k)-d^*(\alpha^*(i),\alpha^*(j))|<2^{-k}\]
for all $i,j,k\in\mathbb{N}$, hence $(X^*,d^*,\alpha^*)$ is computable.
\end{proof}

\section{On the TTE computability in the Moschovakis extension \\of a represented space}\label{S:3}

In this section, an arbitrary represented space $(X,\varrho)$ will be supposed to be given. TTE computability in the represented space $(X^*,\varrho^*)$ constructed as in the previous section will be considered.

\subsection{Some general properties}
As well known, the values of the second component of a represented space at computable sequences of natural numbers are called {\em computable} elements of the space. The following three statements are easily verifiable, and they entail the corollary after them:
\begin{pr}
The element $\bm{o}$ is a computable element of $(X^*,\varrho^*)$.  
\end{pr}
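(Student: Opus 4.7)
The plan is to exhibit an explicit computable sequence of natural numbers that is mapped by $\rho^*$ to $o$. By the definition of a computable element of a represented space, showing the existence of such a sequence is exactly what is required.

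First I would recall clause~(ii) of the inductive definition of $\rho^*$ given at the start of Section~\ref{S:2}: it states that the constant function $\uplambda k.0$ belongs to $\mathrm{dom}(\rho^*)$ and that $\rho^*(\uplambda k.0)=o$. Thus the entire task reduces to observing that $\uplambda k.0$ is a computable element of $\mathbb{N}^\mathbb{N}$, which is immediate since the constantly zero function is computed by a trivial program.

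The one-line argument therefore goes: the sequence $p=\uplambda k.0$ is computable, and by clause~(ii) of the definition of $\rho^*$ we have $\rho^*(p)=o$; hence $o$ is a computable element of $(X^*,\rho^*)$. There is no genuine obstacle here, since the inductive definition of $\rho^*$ was already crafted precisely to build $o$ out of the computable constant sequence $\uplambda k.0$.
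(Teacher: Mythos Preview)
Your argument is correct and is exactly the intended one: the paper states this proposition as ``easily verifiable'' without giving a proof, and the verification is precisely the observation you make---that clause~(ii) of the definition of $\rho^*$ yields $\rho^*(\uplambda k.0)=o$ for the computable constant-zero sequence.
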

\begin{pr}
If $x\in X$ then $x$ is a computable element of $(X^*,\varrho^*)$ iff $x$ is a computable element of $(X,\varrho)$.
\end{pr}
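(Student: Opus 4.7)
The plan is to verify both implications of the equivalence, exploiting the inductive clauses (i)--(iii) in the definition of $\rho^*$.

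For the forward direction, I would start from a computable $q\in\mathrm{dom}(\rho)$ with $\rho(q)=x$ (the defining property of $x$ being computable in $(X,\rho)$) and set $p=\uplambda k.2q(k)+2$. Since $q$ is computable and $p$ is obtained from $q$ by the total recursive operation $n\mapsto 2n+2$, the sequence $p$ is computable. Clause (i) of the definition of $\rho^*$ then gives $\rho^*(p)=\rho(q)=x$, so $x$ is a computable element of $(X^*,\rho^*)$.

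For the converse, suppose $x\in X$ and $\rho^*(p)=x$ for some computable $p\in\mathbb{N}^\mathbb{N}$. The point is to observe that clause (i) is the only one of the three clauses in the inductive definition of $\rho^*$ that can assign a value belonging to $X$: clause (ii) produces only the value $o$, and clause (iii) produces only values of the form $(u,v)$, which by the choice of the pairing operation are not in $X$. Thus $p$ must have been introduced into $\mathrm{dom}(\rho^*)$ via clause (i), meaning $p=\uplambda k.2q(k)+2$ for some $q\in\mathrm{dom}(\rho)$ with $\rho(q)=x$. Since $p$ is computable and $q(k)=(p(k)-2)/2$ is a total recursive function of $p(k)$, the sequence $q$ is computable; therefore $x$ is a computable element of $(X,\rho)$.

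The only delicate point is the appeal to ``clause (i) is the only way to get $\rho^*(p)\in X$''. This rests on the fact that the ranges of $p$'s introduced by the three clauses are mutually disjoint (all-even-$\ge 2$, all-zero, and all-odd, respectively), so that the inductive definition yields a well-defined partial mapping and the clause responsible for any particular $p\in\mathrm{dom}(\rho^*)$ is uniquely determined. Once this is noted, both implications are immediate from the definition and no further machinery is needed.
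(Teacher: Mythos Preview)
Your proposal is correct; the paper itself gives no proof of this proposition, stating only that it is ``easily verifiable,'' and your argument supplies exactly the natural verification implicit in that remark. Both directions rest on clause~(i) of the inductive definition of $\rho^*$ together with the observation that the three clauses produce functions with pairwise disjoint ranges (positive even, constantly zero, odd), so that only clause~(i) can yield a value in~$X$; one minor quibble is that what you call the ``forward direction'' is actually the implication from right to left of the stated biconditional, but this is purely a labeling matter.
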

\begin{pr}
If $u$ and $v$ belong to $X^*$ then $(u,v)$ is a computable element of $(X^*,\varrho^*)$ iff $u$ and $v$ are computable elements of $(X^*,\varrho^*)$.
\end{pr}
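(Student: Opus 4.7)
The plan is to exploit the inductive definition of $\rho^*$ together with the assumption, built into the construction of $X^*$, that no element of $X\cup\{o\}$ is an ordered pair. The three clauses (i)--(iii) defining $\rho^*$ produce names whose values lie respectively in the positive even integers, in $\{0\}$, and in the positive odd integers, so the three families of names are pairwise disjoint; hence for every $p\in\mathrm{dom}(\rho^*)$ exactly one clause applies and the recursion is unambiguous.

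For the ``if'' direction, suppose $u$ and $v$ admit computable names $q$ and $r$. Since $J$ is a computable bijection, the function $p=\uplambda k.2J(q(k),r(k))+1$ is computable, and clause~(iii) gives $\rho^*(p)=(u,v)$, so $(u,v)$ is computable. For the ``only if'' direction, let $p$ be a computable name of $(u,v)$. Clause~(i) produces only elements of $X$ and clause~(ii) produces only $o$, and none of these is a pair; consequently $p$ must arise from clause~(iii). Thus $p=\uplambda k.2J(q(k),r(k))+1$ for some $q,r\in\mathrm{dom}(\rho^*)$ with $\rho^*(q)=u$ and $\rho^*(r)=v$. Since $J$ is a computable bijection, so is $J^{-1}$, and from $(q(k),r(k))=J^{-1}((p(k)-1)/2)$ we extract $q$ and $r$ as computable functions, so they are computable names of $u$ and $v$.

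The only delicate step is the assertion, in the ``only if'' direction, that every name of $(u,v)$ comes from clause~(iii); this rests on the disjointness of the three families of names noted at the outset together with the non-pair property of elements of $X\cup\{o\}$. Everything else is routine bookkeeping about the computability of $J$ and $J^{-1}$.
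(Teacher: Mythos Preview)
Your argument is correct and is exactly the routine verification the paper has in mind: the paper does not actually give a proof of this proposition, stating only that it (together with the two neighbouring propositions) is ``easily verifiable''. Your use of the parity disjointness of the three clauses to force any name of a pair through clause~(iii), followed by the computable inversion of $J$, is precisely the intended check.
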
 
\begin{co}
The set of the computable elements of the represented space $(X^*,\varrho^*)$ is the closure under the pairing operation of the set consisting of the element $\bm{o}$ and the computable elements of the represented space $(X,\varrho)$.
\end{co}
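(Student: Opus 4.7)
The plan is to establish the two inclusions separately, using the three preceding propositions as the essential input. Let $C^*$ denote the set of computable elements of $(X^*,\rho^*)$, let $C$ denote the set of computable elements of $(X,\rho)$, and let $K$ denote the closure of $C\cup\{o\}$ under the pairing operation.

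For the inclusion $K\subseteq C^*$, I would argue that $C^*$ itself is a set which contains $o$ (by the first proposition), contains $C$ (by the forward direction of the second proposition), and is closed under the pairing operation (by the ``if'' direction of the third proposition). Since $K$ is by definition the smallest set with these three properties, we immediately get $K\subseteq C^*$.

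For the reverse inclusion $C^*\subseteq K$, I would proceed by structural induction on $X^*$, which is well founded because $X^*$ is defined as the closure of $X\cup\{o\}$ under pairing (and no element of $X\cup\{o\}$ is itself an ordered pair). Let $z\in C^*$. If $z\in X$, then the second proposition (in the converse direction) gives $z\in C\subseteq K$. If $z=o$, then trivially $z\in K$. Otherwise $z=(u,v)$ for uniquely determined $u,v\in X^*$; by the ``only if'' direction of the third proposition, both $u$ and $v$ lie in $C^*$, so by the inductive hypothesis $u,v\in K$, and therefore $(u,v)\in K$ by closure of $K$ under pairing.

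No real obstacle is expected here: the work has already been done in the three propositions, and the corollary is essentially a matching of their content against the inductive description of $X^*$. The only point that needs care is invoking structural induction on $X^*$, which is justified by the construction of $X^*$ recalled in Section~\ref{S:1}.
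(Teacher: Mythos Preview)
Your proposal is correct and is exactly the natural way to spell out what the paper leaves implicit: the paper gives no explicit proof of this corollary, merely stating that the three preceding propositions ``entail the corollary after them''. Your two-inclusion argument via structural induction on $X^*$ is the intended reading of that remark.
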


If $(X_1,\varrho_1)$ and $(X_2,\varrho_2)$ are represented spaces, then, by definition, a {\em $(\varrho_1,\varrho_2)$-realization} of a partial function $\varphi$ from $X_1$ to $X_2$ is any partial mapping $\gamma$ of $\mathbb{N}^\mathbb{N}$ into $\mathbb{N}^\mathbb{N}$ such that  $p\in\mathrm{dom}(\gamma)$, $\gamma(p)\in\mathrm{dom}(\varrho_2)$ and $\varphi(\varrho_1(p))=\varrho_2(\gamma(p))$ for any $p\in\varrho_1^{-1}(\mathrm{dom}(\varphi))$; the function $\varphi$ is said to be {\em$(\varrho_1,\varrho_2)$-computable} if a computable $(\varrho_1,\varrho_2)$-realization of $\varphi$ exists.\footnote{The computability notion for partial mappings of $\mathbb{N}^\mathbb{N}$ into $\mathbb{N}^\mathbb{N}$ is supposed to be introduced in some of the well-known mutually equivalent ways.} Clearly the last condition is satisfied in the particular case when $\mathrm{rng}(\varphi)$ consists of only one element and it is a computable element of $(X_2,\varrho_2)$. Thus, for instance, the constant functions $T_1$ and $F_1$ which are components of the combinatory space $\mathfrak{S}_1(X)$ are $(\varrho^*,\varrho^*)$-computable (with the same $\varrho^*$ as above). Of course, the condition is satisfied also in the case when $(X_1,\varrho_1)=(X_2,\varrho_2)$ and $\varphi$ is the corresponding identity mapping (or some restriction of it). A little more care is needed for the case of the components $L_1$ and $R_1$ of $\mathfrak{S}_1(X)$.
\begin{pr}\label{lr}
The functions $L_1$ and $R_1$ are $(\varrho^*,\varrho^*)$-computable.
\end{pr}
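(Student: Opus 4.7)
The plan is to exhibit explicit computable $(\rho^*,\rho^*)$-realizers for $L_1$ and $R_1$ by decoding the syntactic structure of a $\rho^*$-name. The key preliminary observation, established by inspecting the three clauses of the inductive definition of $\rho^*$, is that each $p\in\mathrm{dom}(\rho^*)$ arises from exactly one clause, and the three clauses produce disjoint ``shapes'': clause (i) yields names all of whose values are positive and even, clause (ii) yields only the constant $\uplambda k.0$, and clause (iii) yields names all of whose values are positive and odd. Consequently, the single value $p(0)$ already determines which clause produced $p$, and therefore whether $\rho^*(p)$ equals $o$, lies in $X$, or is an ordered pair.

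With this in hand, the realizer $\gamma_L$ of $L_1$ is defined on $p\in\mathrm{dom}(\rho^*)$ by case analysis on $p(0)$. If $p(0)=0$, then $p=\uplambda k.0$, $\rho^*(p)=o$ and $L_1(o)=o$, so we output $\uplambda k.0$. If $p(0)$ is positive and even, then $\rho^*(p)\in X$ and $L_1(\rho^*(p))=(o,o)$, so we output the constant sequence $\uplambda k.(2J(0,0)+1)$, which by clause (iii) applied to $q=r=\uplambda k.0$ is a $\rho^*$-name of $(o,o)$. If $p(0)$ is odd, clause (iii) supplies unique $q,r\in\mathrm{dom}(\rho^*)$ with $p(k)=2J(q(k),r(k))+1$ for every $k$; we output $q$, which is then a $\rho^*$-name of $L_1(\rho^*(p))=\rho^*(q)$. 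The realizer $\gamma_R$ of $R_1$ is identical except that in the last branch one outputs $r$ in place of $q$.

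That $\gamma_L$ and $\gamma_R$ are computable reduces to three routine items: decidability of the three cases for $p(0)$; computability of the constant $2J(0,0)+1$; and computability of the component inverses of $J$, which follows from $J$ being a computable bijection from $\mathbb{N}^2$ to $\mathbb{N}$ and ensures that $q(k)$ and $r(k)$ are computed uniformly from $p(k)$ in the odd-valued case. The only step requiring any real argument is the structural observation in the first paragraph, which is a straightforward simultaneous induction over the three clauses defining $\rho^*$; beyond that, the verification is a direct unfolding of the definitions of $L_1$, $R_1$ and $\rho^*$, so no genuine obstacle is expected.
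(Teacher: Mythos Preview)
Your proposal is correct and essentially coincides with the paper's proof: the paper packages your three-way case analysis into a single computable function $g:\mathbb{N}\to\mathbb{N}$ with $g(0)=0$, $g(2i+2)=1=2J(0,0)+1$, $g(2J(m,n)+1)=m$, and takes $\gamma(p)(k)=g(p(k))$, which on $\mathrm{dom}(\rho^*)$ yields exactly the same output as your realizer. The only difference is presentational---you branch once on $p(0)$ and then process all $p(k)$ accordingly, whereas the paper applies $g$ pointwise to each $p(k)$---but the underlying idea and the resulting realizer are the same.
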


\begin{proof}
A computable $(\varrho^*,\varrho^*)$-realization $\gamma$ of $L_1$ can be defined by setting $\gamma(p)(k)=g(p(k))$, where the function $g:\mathbb{N}\to\mathbb{N}$ is defined by means of the equalities
\[g(0)=0,\ g(2i+2)=1,\footnote{Note that $1=2J(0,0)+1$ because the surjectivity of $J$ and the inequalities $J(m,n)\ge m$, $J(m,n)\ge n$ imply $J(0,0)=0$.}\ g(2J(m,n)+1)=m.\] For $R_1$, the construction of a computable $(\varrho^*,\varrho^*)$-realization is similar -- with right-hand side $n$ of the third equality in the definition of $g$.
\end{proof}

The identity mapping $\mathrm{id}_X$ of $X$ into $X$ can be regarded also as a function from $X$ to $X^*$, as well as a partial function from $X^*$ to $X$.
\begin{pr}\label{identity}
The mapping $\mathrm{id}_X$ is~a~$(\varrho,\varrho^*)$-computable function from~$X$ to~$X^*$, as well as a~$(\varrho^*,\varrho)$-computable partial function from $X^*$ to $X$.
\end{pr}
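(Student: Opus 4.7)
The plan is to exhibit an explicit computable realization for each direction. For the $(\rho,\rho^*)$-computability of $\mathrm{id}_X$ viewed as a function $X\to X^*$, I will take $\gamma(p):=\uplambda k.\,2p(k)+2$. This is a total computable operator on $\mathbb{N}^\mathbb{N}$, and clause (i) of the inductive definition of $\rho^*$ in Section~\ref{S:2} immediately yields, for every $p\in\mathrm{dom}(\rho)$, that $\gamma(p)\in\mathrm{dom}(\rho^*)$ and $\rho^*(\gamma(p))=\rho(p)=\mathrm{id}_X(\rho(p))$. So $\gamma$ is a computable $(\rho,\rho^*)$-realization, essentially by definition.

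For the reverse direction I will use the partial inverse $\delta(p):=\uplambda k.\,(p(k)-2)/2$, defined on those $p\in\mathbb{N}^\mathbb{N}$ all of whose values are positive even numbers; this is clearly a computable partial function on $\mathbb{N}^\mathbb{N}$. What remains to check is that whenever $p\in\mathrm{dom}(\rho^*)$ satisfies $\rho^*(p)\in X$, such a $p$ lies in $\mathrm{dom}(\delta)$, that $\delta(p)\in\mathrm{dom}(\rho)$, and that $\rho(\delta(p))=\rho^*(p)$.

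The only nontrivial ingredient is a disjointness observation about the inductive definition of $\rho^*$: the three generating clauses apply to $p$'s with mutually exclusive value patterns (all zero, all positive even, all positive odd respectively) and produce outputs lying in three pairwise disjoint subsets of $X^*$, namely $\{o\}$, $X$, and the set of ordered pairs $X^*\setminus(X\cup\{o\})$ (the last two are disjoint from $\{o\}$ and from each other by the very choice of the pairing operation used to build $X^*$). An easy induction on the construction of $\mathrm{dom}(\rho^*)$ then shows that if $\rho^*(p)\in X$ then $p$ must have been placed in $\mathrm{dom}(\rho^*)$ via clause (i), hence $p=\uplambda k.\,2q(k)+2$ for some $q\in\mathrm{dom}(\rho)$ with $\rho(q)=\rho^*(p)$. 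Consequently $\delta(p)=q$ is defined, lies in $\mathrm{dom}(\rho)$, and satisfies $\rho(\delta(p))=\rho^*(p)$, as required. This disjointness/uniqueness observation is the main (minor) obstacle; the rest is pure bookkeeping against the definitions.
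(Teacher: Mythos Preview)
Your proof is correct and follows essentially the same approach as the paper. For the forward direction you use exactly the paper's realization $\gamma(p)(k)=2p(k)+2$; for the reverse direction the paper writes the total map $\gamma(p)(k)=\lfloor p(k)/2\rfloor\dotminus 1$ instead of your partial $\delta(p)(k)=(p(k)-2)/2$, but on the relevant inputs (those $p$ with $\rho^*(p)\in X$, which by your disjointness observation have only positive even values) the two agree, so this is a cosmetic difference rather than a genuinely different route.
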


\begin{proof}
To define a computable $(\varrho,\varrho^*)$-realization of $\mathrm{id}_X$, we may set $\gamma(p)(k)=2p(k)+2$. A computable $(\varrho^*,\varrho)$-realization of $\mathrm{id}_X$ can be defined by setting $\gamma(p)(k)=\lfloor p(k)/2\rfloor\dotminus 1$.
\end{proof}

\begin{co}
For any partial function from $X$ to $X$, its $(\varrho,\varrho)$-computability is equivalent to any of the following three properties: $(\varrho,\varrho^*)$-computability as a partial function from $X$ to~$X^*$, $(\varrho^*,\varrho)$-computability as a partial function from $X^*$ to $X$, $(\varrho^*,\varrho^*)$-computability as a partial function from $X^*$ to $X^*$.
\end{co}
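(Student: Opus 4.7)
The plan is to reduce everything to composability of computable partial mappings of $\mathbb{N}^\mathbb{N}$ into $\mathbb{N}^\mathbb{N}$ combined with Proposition~\ref{identity}. Fix once and for all a computable $(\rho,\rho^*)$-realization $\iota$ of $\mathrm{id}_X$ regarded as a function $X\to X^*$, and a computable $(\rho^*,\rho)$-realization $\iota'$ of $\mathrm{id}_X$ regarded as a partial function $X^*\to X$; both exist by Proposition~\ref{identity}. The point of these two realizations is that $\iota$ converts $\rho$-names of elements of $X$ into $\rho^*$-names of the same elements, and $\iota'$ converts $\rho^*$-names of elements that happen to lie in $X$ into $\rho$-names of them.

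For the direction ``$(\rho,\rho)$-computable implies each of the other three properties'', start from a computable $(\rho,\rho)$-realization $\gamma$ of $\varphi$ and form the three compositions $\iota\gamma$, $\gamma\iota'$ and $\iota\gamma\iota'$. Each of these is again a computable partial mapping $\mathbb{N}^\mathbb{N}\to\mathbb{N}^\mathbb{N}$. By chasing the definitions on inputs taken from $\rho^{-1}(\mathrm{dom}(\varphi))$ or $(\rho^*)^{-1}(\mathrm{dom}(\varphi))$, these compositions witness $(\rho,\rho^*)$-, $(\rho^*,\rho)$- and $(\rho^*,\rho^*)$-computability of $\varphi$ respectively.

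For the reverse implications, start from a computable realization $\gamma$ of whichever of the three flavours is assumed and pre- or post-compose with $\iota$ and/or $\iota'$: namely take $\iota'\gamma$ to obtain a $(\rho,\rho)$-realization from a $(\rho,\rho^*)$-realization, $\gamma\iota$ to obtain one from a $(\rho^*,\rho)$-realization, and $\iota'\gamma\iota$ to obtain one from a $(\rho^*,\rho^*)$-realization. Verifying that each of these is really a $(\rho,\rho)$-realization of $\varphi$ is a straightforward definition chase.

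The only delicate point, and the main thing one has to keep honest track of, is the set-theoretic placement of the various names: $\iota'$ is only a realization of the \emph{partial} identity $X^*\to X$, so one may apply it only to $p$ with $\rho^*(p)\in X$; fortunately this holds whenever $p$ is an output of $\gamma$ on an input in $\mathrm{dom}(\varphi)$, because $\mathrm{rng}(\varphi)\subseteq X$. Symmetrically, $\iota$ is applied only to $\rho$-names of elements of $X$, and these suffice since $\mathrm{dom}(\varphi)\subseteq X$. Once this bookkeeping is dispatched, the four computability flavours are seen to be interderivable by means of computable pre- and post-composition with $\iota$ and $\iota'$.
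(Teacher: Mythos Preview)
Your proof is correct and is exactly the argument the paper has in mind: the corollary is stated immediately after Proposition~\ref{identity} with no separate proof, and the intended justification is precisely the composition of realizations with the computable $(\rho,\rho^*)$- and $(\rho^*,\rho)$-realizations of $\mathrm{id}_X$ supplied there. Your bookkeeping about where $\iota'$ may legitimately be applied is the only point requiring any care, and you handle it correctly.
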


If $(X_0,\varrho_0)$ is a represented space then a partial function $\varphi$ from $X_0$ to $\mathbb{N}$ is said to be {\em$\varrho_0$-computable} if a computable partial mapping $\gamma$ of $\mathbb{N}^\mathbb{N}$ into $\mathbb{N}$ exists such that $p\in\mathrm{dom}(\gamma)$ and $\varphi(\varrho_0(p))=\gamma(p)$ for all $p\in\varrho_0^{-1}(\mathrm{dom}(\varphi))$; a partial function $\psi$ from $\mathbb{N}$ to $X_0$ is said to be {\em$\varrho_0$-computable} if a computable partial mapping $\delta$ of $\mathbb{N}$ into $\mathbb{N}^\mathbb{N}$ exists such that $n\in\mathrm{dom}(\delta)$, $\delta(n)\in\mathrm{dom}(\varrho_0)$ and $\psi(n)=\varrho_0(\delta(n))$ for all $n\in\mathrm{dom}(\psi)$.

\begin{pr}
A partial function from $X$ to $\mathbb{N}$ is $\varrho$-computable iff it is $\varrho^*$-computable as a partial function from $X^*$ to $\mathbb{N}$.
\end{pr}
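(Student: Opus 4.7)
The plan is to reduce both implications to Proposition \ref{identity}, which gives explicit computable realizers converting between $\rho$-names and $\rho^*$-names of elements of $X$. Specifically, the forward map sends $p \mapsto \uplambda k.\,2p(k)+2$ (realizing $\mathrm{id}_X$ as a $(\rho,\rho^*)$-computable total function on $X$), and the backward map sends $p \mapsto \uplambda k.\,\lfloor p(k)/2\rfloor \dotminus 1$ (realizing $\mathrm{id}_X$ as a $(\rho^*,\rho)$-computable partial function on $X^* \supseteq X$). Since the functions under consideration go to $\mathbb{N}$ (not to a represented space), composing a realizer with one of these conversions produces a realizer for the other representation, without any need to deal with $\mathbb{N}$-valued realizers on the output side.

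For the direction $\rho$-computable $\Rightarrow$ $\rho^*$-computable: given a computable partial $\gamma:\mathbb{N}^\mathbb{N}\to\mathbb{N}$ realizing $\varphi$ with respect to $\rho$, I would define $\gamma^*(p) = \gamma(\uplambda k.\,\lfloor p(k)/2\rfloor \dotminus 1)$. The key observation is that, because $\mathrm{dom}(\varphi)\subseteq X$ and by the inductive definition of $\rho^*$ (clauses (i)--(iii) cannot produce an element of $X$ except via (i), since $o\notin X$ and ordered pairs are not in $X\cup\{o\}$), every $p\in\rho^{*-1}(\mathrm{dom}(\varphi))$ must have the form $p = \uplambda k.\,2q(k)+2$ for some $q\in\mathrm{dom}(\rho)$ with $\rho(q)=\rho^*(p)$. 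Thus $\uplambda k.\,\lfloor p(k)/2\rfloor \dotminus 1 = q$, and $\gamma^*(p) = \gamma(q) = \varphi(\rho(q)) = \varphi(\rho^*(p))$, as required.

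For the direction $\rho^*$-computable $\Rightarrow$ $\rho$-computable: given a computable $\gamma^*$ realizing $\varphi$ with respect to $\rho^*$, I would set $\gamma(p) = \gamma^*(\uplambda k.\,2p(k)+2)$. For any $p\in\rho^{-1}(\mathrm{dom}(\varphi))$, clause~(i) in the definition of $\rho^*$ gives $\uplambda k.\,2p(k)+2\in\rho^{*-1}(\mathrm{dom}(\varphi))$ with $\rho^*(\uplambda k.\,2p(k)+2)=\rho(p)$, so $\gamma(p)=\varphi(\rho(p))$.

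There is no real obstacle here; the only point requiring some care is the observation in the forward direction that every $\rho^*$-name of an element of $X$ necessarily arises from clause~(i) (so that the simple arithmetic transformation $\lfloor p(k)/2\rfloor \dotminus 1$ genuinely recovers a $\rho$-name), which is immediate from the inductive construction of $\rho^*$ together with the assumption that no element of $X\cup\{o\}$ is an ordered pair.
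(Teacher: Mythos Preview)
Your proof is correct and follows exactly the approach the paper intends: the paper's proof consists of the single line ``Follows from Proposition~\ref{identity},'' and you have simply unpacked this by composing the given realizer with the explicit $(\rho,\rho^*)$- and $(\rho^*,\rho)$-realizers of $\mathrm{id}_X$ from the proof of that proposition. The only additional content you supply is the observation that every $\rho^*$-name of an element of $X$ arises from clause~(i), which is indeed what makes the backward conversion work and is implicit in the paper's use of Proposition~\ref{identity}.
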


\begin{proof}
Follows from Proposition \ref{identity}.
\end{proof}

\begin{pr}
The function $n\mapsto n^*$ from $\mathbb{N}$ to $X^*$ and its inverse function are $\varrho^*$-computable.
\end{pr}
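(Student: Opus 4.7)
The plan is to exhibit an explicit computable realization in each direction. For the forward direction, I would construct a computable total function $\delta:\mathbb{N}\to\mathbb{N}^\mathbb{N}$ satisfying $\rho^*(\delta(n))=n^*$; for the inverse, a computable partial function $\gamma:\mathbb{N}^\mathbb{N}\to\mathbb{N}$ such that $\gamma(p)=n$ whenever $\rho^*(p)=n^*$.

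The forward map $\delta$ is defined by primitive recursion following the inductive clauses of~$\rho^*$: set $\delta(0)=\uplambda k.0$, which represents $o=0^*$ by clause~(ii) of the definition of~$\rho^*$, and $\delta(n+1)(k)=2J(0,\delta(n)(k))+1$. Clause~(iii), applied with the $\rho^*$-names $\uplambda k.0$ of $o$ and $\delta(n)$ of $n^*$, immediately yields $\rho^*(\delta(n+1))=(o,n^*)=(n+1)^*$, so an induction on~$n$ gives $\rho^*(\delta(n))=n^*$ throughout. Computability of $(n,k)\mapsto\delta(n)(k)$ is clear since $J$ is computable.

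For $\gamma$, the idea is to peel off the outer pairing one step at a time. Given $p$, initialize $c:=0$ and $r:=p$, then loop: if $r(0)=0$, halt and return~$c$; otherwise replace $r$ by the function $k\mapsto a_k$, where $a_k$ is the second component of $J^{-1}((r(k)-1)/2)$, and increment~$c$. The main obstacle I anticipate is establishing that this procedure terminates in exactly $n$ steps on every $\rho^*$-name of $n^*$ and outputs~$n$. The key observation to exploit is that the three clauses defining $\rho^*$ force any name in $\mathrm{dom}(\rho^*)$ to take either only zero values, only positive even values, or only positive odd values, so the three cases are mutually exclusive and $r(0)$ alone suffices to distinguish them. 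An induction on~$n$ then closes the argument: in the base case $\rho^*(p)=o$, only clause~(ii) applies, so $p=\uplambda k.0$ and the loop immediately returns~$0$; for $n\geq 1$, since $n^*=(o,(n-1)^*)$ is a pair, only clause~(iii) can produce~$p$, hence $p=\uplambda k.2J(q(k),r'(k))+1$ with $\rho^*(q)=o$ and $\rho^*(r')=(n-1)^*$. By the base case, $q=\uplambda k.0$, and a single peeling step transforms $p$ into precisely~$r'$; the induction hypothesis finishes the proof.
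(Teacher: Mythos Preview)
Your proof is correct and follows the same overall plan as the paper: exhibit an explicit computable realization in each direction. Your forward map $\delta$ is exactly the paper's (the paper writes it as $\delta(n)(k)=f(n)$ with $f(0)=0$, $f(n+1)=2J(0,f(n))+1$, which is your recursion unfolded).

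For the inverse, the paper is slicker. It observes that any $\rho^*$-name of $n^*$ is necessarily the \emph{constant} sequence with value $f(n)$ (this is what your own induction argument actually establishes), so it suffices to set $\gamma(p)=\upmu n[f(n)=p(0)]$: a single evaluation at $0$ followed by an unbounded search. Your peeling loop is correct, but it maintains the whole sequence $r$ at each stage when in fact only $r(0)$ matters; effectively you are computing the same $\upmu n[f(n)=p(0)]$ by repeatedly applying the second-projection-of-$J^{-1}$ map to $p(0)$. So the two arguments are the same in substance, with the paper's packaging being a bit more economical.
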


\begin{proof}
Let $f:\mathbb{N}\to\mathbb{N}$ be defined by means of the equalities $f(0)=0$, $f(n+1)=J(0,f(n))$, and, for any $n\in\mathbb{N}$, let $\delta(n):\mathbb{N}\to\mathbb{N}^\mathbb{N}$ be defined by means of the equality $\delta(n)(k)=f(n)$. Then $\delta$ is a computable mapping of $\mathbb{N}$ into $\mathrm{dom}(\varrho^*)$ and $n^*=\varrho^*(\delta(n))$ for all $n\in\mathbb{N}$, hence the function $n\mapsto n^*$ is $\varrho^*$-computable. Let $g$ be the partial function in $\mathbb{N}$ defined by means of the equality $g(m)=\upmu n[f(n)=m]$, and let $\gamma$ be the partial mapping of $\mathbb{N}^\mathbb{N}$ into $\mathbb{N}$ defined by means of the equality $\gamma(p)=g(p(0))$. Then $\gamma$ is computable, and, whenever $p\in\mathrm{dom}(\varrho^*)$ and $\varrho^*(p)=n^*$, where $n\in\mathbb{N}$, the equality $n=\gamma(p)$ holds. This shows the $\varrho^*$-computability of the inverse function of $n\mapsto n^*$.
\end{proof}

\begin{co}
A partial function $\varphi$ from $X^*$ to $\mathbb{N}$ is $\varrho^*$-computable iff the~partial function $z\mapsto\varphi(z)^*$ from $X^*$ to $X^*$ is $(\varrho^*,\varrho^*)$-computable.
\end{co}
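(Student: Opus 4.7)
The plan is to treat both directions as composition of computable realizers, using the preceding proposition which asserts that $n\mapsto n^*$ has a computable $\rho^*$-realizer and that its inverse (as a partial function from $X^*$ to $\mathbb{N}$) has a computable $\rho^*$-realizer as well.

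For the ``only if'' direction, I assume $\varphi$ is $\rho^*$-computable, so fix a computable partial $\gamma_1:\mathbb{N}^\mathbb{N}\to\mathbb{N}$ with $\gamma_1(p)=\varphi(\rho^*(p))$ for every $p\in{\rho^*}^{-1}(\mathrm{dom}(\varphi))$. By the preceding proposition, there is a computable total $\delta:\mathbb{N}\to\mathbb{N}^\mathbb{N}$ whose image lies in $\mathrm{dom}(\rho^*)$ and satisfies $\rho^*(\delta(n))=n^*$ for all $n\in\mathbb{N}$. Then $p\mapsto\delta(\gamma_1(p))$ is a computable partial mapping of $\mathbb{N}^\mathbb{N}$ into $\mathbb{N}^\mathbb{N}$, and on any $p\in{\rho^*}^{-1}(\mathrm{dom}(\varphi))$ it yields an element of $\mathrm{dom}(\rho^*)$ whose $\rho^*$-value is $\varphi(\rho^*(p))^*$; this is exactly a $(\rho^*,\rho^*)$-realization of $z\mapsto\varphi(z)^*$.

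For the ``if'' direction, I assume $z\mapsto\varphi(z)^*$ is $(\rho^*,\rho^*)$-computable and fix a computable partial $\gamma_2:\mathbb{N}^\mathbb{N}\to\mathbb{N}^\mathbb{N}$ realizing it. Let $\gamma:\mathbb{N}^\mathbb{N}\to\mathbb{N}$ be the computable partial mapping given by the preceding proposition that realizes the inverse of $n\mapsto n^*$ relative to $\rho^*$, so that $\gamma(q)=n$ whenever $q\in\mathrm{dom}(\rho^*)$ and $\rho^*(q)=n^*$. Then $p\mapsto\gamma(\gamma_2(p))$ is a computable partial mapping of $\mathbb{N}^\mathbb{N}$ into $\mathbb{N}$; on any $p\in{\rho^*}^{-1}(\mathrm{dom}(\varphi))$ the intermediate value $\gamma_2(p)$ lies in $\mathrm{dom}(\rho^*)$ with $\rho^*(\gamma_2(p))=\varphi(\rho^*(p))^*$, so the outer application of $\gamma$ recovers $\varphi(\rho^*(p))$. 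Hence $\varphi$ is $\rho^*$-computable.

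The argument is essentially just closure of computability on $\mathbb{N}^\mathbb{N}$ under composition, so there is no real obstacle; the only point requiring a little care is checking that the composed realizers have the correct domains of definition, which follows immediately from the fact that $\gamma_2$ is guaranteed to produce points in $\mathrm{dom}(\rho^*)$ on inputs from ${\rho^*}^{-1}(\mathrm{dom}(\varphi))$ and that $\gamma$ is defined on every such output.
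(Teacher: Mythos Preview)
Your proof is correct and is precisely the argument the paper intends: the corollary is stated without proof immediately after the proposition on the $\rho^*$-computability of $n\mapsto n^*$ and its inverse, and your composition of realizers in each direction is exactly the routine verification this placement implies. There is nothing to add.
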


\begin{co}
A partial function $h$ from $\mathbb{N}$ to $\mathbb{N}$ is computable iff the~partial function $n^*\mapsto h(n)^*$ from $\mathbb{N}^*$ to $\mathbb{N}^*$ is $(\varrho^*,\varrho^*)$-computable.
\end{co}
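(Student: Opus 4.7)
The plan is to reduce the statement to the immediately preceding corollary by viewing $h$ as a partial function $\varphi$ from $X^*$ to $\mathbb{N}$ with domain $\{n^*\,|\,n\in\mathrm{dom}(h)\}$, namely $\varphi(n^*)=h(n)$. Then the partial function $z\mapsto\varphi(z)^*$ from $X^*$ to $X^*$ coincides with the partial function $n^*\mapsto h(n)^*$ of the statement (both have the same domain and agree on it). By the preceding corollary, the $(\rho^*,\rho^*)$-computability of $n^*\mapsto h(n)^*$ is equivalent to the $\rho^*$-computability of $\varphi$. Hence it will suffice to show that $h$ is computable as a partial function from $\mathbb{N}$ to $\mathbb{N}$ iff $\varphi$ is $\rho^*$-computable.

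For the ``only if'' direction of this reduced equivalence, suppose $h$ is computable. By the proposition on $n\mapsto n^*$ and its inverse, there is a computable partial mapping $\gamma$ of $\mathbb{N}^\mathbb{N}$ into $\mathbb{N}$ such that $\gamma(p)=n$ whenever $\rho^*(p)=n^*$. Composing $h$ with $\gamma$ yields a computable partial mapping $p\mapsto h(\gamma(p))$ of $\mathbb{N}^\mathbb{N}$ into $\mathbb{N}$; by construction, on every $p\in{\rho^*}^{-1}(\mathrm{dom}(\varphi))$ it is defined and equals $\varphi(\rho^*(p))$, so it realizes $\varphi$ with respect to $\rho^*$.

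For the ``if'' direction, suppose $\varphi$ is $\rho^*$-computable, witnessed by a computable partial mapping $\gamma'$ of $\mathbb{N}^\mathbb{N}$ into $\mathbb{N}$ with $\gamma'(p)=\varphi(\rho^*(p))$ for every $p\in{\rho^*}^{-1}(\mathrm{dom}(\varphi))$. By the same proposition, there is a computable total mapping $\delta:\mathbb{N}\to\mathbb{N}^\mathbb{N}$ with $\delta(n)\in\mathrm{dom}(\rho^*)$ and $\rho^*(\delta(n))=n^*$ for all $n\in\mathbb{N}$. Then for every $n\in\mathrm{dom}(h)$ the input $\delta(n)$ lies in ${\rho^*}^{-1}(\mathrm{dom}(\varphi))$, and $h(n)=\varphi(n^*)=\gamma'(\delta(n))$, so $h$ is computable as a composition of computable functions.

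There is no essential obstacle: the substantive work (transit between $\mathbb{N}$ and the codes $n^*$, and the passage between $\mathbb{N}$-valued and $X^*$-valued partial functions via $z\mapsto z^*$) has already been carried out in the proposition on $n\mapsto n^*$ and in the preceding corollary. The only point demanding mild care is to match domains correctly, observing that $\mathrm{dom}(\varphi)=\{n^*\,|\,n\in\mathrm{dom}(h)\}$ so that the reduction to the previous corollary is lawful in both directions.
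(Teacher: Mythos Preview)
Your proposal is correct and matches the intended derivation: the paper states this corollary without proof, leaving it to follow from the preceding proposition (on the $\rho^*$-computability of $n\mapsto n^*$ and its inverse) and the preceding corollary (relating $\rho^*$-computability of an $\mathbb{N}$-valued $\varphi$ to $(\rho^*,\rho^*)$-computability of $z\mapsto\varphi(z)^*$), which is precisely the route you take.
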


\subsection{Closedness of the set of the TTE computable functions from \texorpdfstring{$\mathcal{F}_1$}{F1} under \texorpdfstring{$\mathfrak{S}_1(X)$}{S1}-computability} 
\begin{thm}\label{compF1}
Let $\mathcal{B}_1$ be the set of all functions from $\mathcal{F}_1$ which are $(\varrho^*,\varrho^*)$-computable. Then all functions $\mathfrak{S}_1(X)$-computable in $\mathcal{B}_1$ belong to $\mathcal{B}_1$.
\end{thm}

\begin{proof}
Having in mind footnote \ref{sf} and Proposition \ref{lr}, as well as the fact that $\mathcal{B}_1$ is closed under composition, we must prove that $\mathcal{B}_1$ is closed under the operations $\Pi_1$ and $\mathfrak{S}_1(X)$-iteration. Let $\varphi_1,\varphi_2\in\mathcal{B}_1$. Then there are computable $(\varrho^*,\varrho^*)$-realizations $\gamma_1$ and $\gamma_2$ of $\varphi_1$ and $\varphi_2$, respectively. To show that $\Pi_1(\varphi_1,\varphi_2)\in\mathcal{B}_1$, we make use of the mapping $\gamma:\mathrm{dom}(\gamma_1)\cap\mathrm{dom}(\gamma_2)\to\mathbb{N}^\mathbb{N}$ defined by means of the equality
\[\gamma(p)(k)=2J(\gamma_1(p)(k),\gamma_2(p)(k))+1.\]
Let now $\iota$ be the $\mathfrak{S}_1(X)$-iteration of $\varphi_1$ controlled by $\varphi_2$. The proof that $\iota\in\mathcal{B}_1$ is based on the following observation (where $\gamma_1^0$ must be considered as denoting the identity mapping of $\mathbb{N}^\mathbb{N}$ into $\mathbb{N}^\mathbb{N}$): if $p\in{\varrho^*}^{-1}(\mathrm{dom}(\iota))$ then a natural number $n$ exists such that $p\in\mathrm{dom}(\gamma_1^n)$, $\gamma_1^n(p)\in\mathrm{dom}(\varrho^*)$, $\iota(\varrho^*(p))=\varrho^*(\gamma_1^n(p))$, $\gamma_1^i(p)\in\mathrm{dom}(\gamma_2)$ for all $i\le n$ and 
\[\gamma_2(\gamma_1^i(p))(0)\!\!\!\!\mod 2=
\begin{cases}
0&\text{if }i=n,\\
1&\text{if }i<n.
\end{cases}\]
The mappings $\gamma_1$ and $\gamma_2$ can be extended to computable operators $\Gamma_1$ and $\Gamma_2$ in the set of all unary partial functions in $\mathbb{N}$. If $p\in{\varrho^*}^{-1}(\mathrm{dom}(\iota))$ then 
\[\iota(\varrho^*(p))=\varrho^*(\Gamma_1^n(p))\text{ with }n=\upmu i[\Gamma_2(\Gamma_1^i(p))(0)\!\!\!\!\mod 2=0].\]
Let $\Gamma_1^\dag$ be the operator transforming unary partial functions in $\mathbb{N}$ into binary ones which is defined by the equality
\[\Gamma_1^\dag(f)(i,k)=\Gamma_1^i(f)(k).\]
Let the operators $\Gamma_2^\dag$ and $\Gamma$ transforming unary partial functions in $\mathbb{N}$ into unary ones be defined as follows:
\begin{gather*}
\Gamma_2^\dag(f)(i)=\Gamma_2(\Gamma_1^i(f))(0),\\
\Gamma(f)(k)=\Gamma_1^\dag(f)(\upmu i[\Gamma_2^\dag(f)(i)\!\!\!\!\mod 2=0],k).
\end{gather*}
One consecutively shows that $\Gamma_1^\dag,\Gamma_2^\dag,\Gamma$ are computable (this can be done, for instance, by using their continuity and the effectiveness of their restrictions to partial recursive functions). For all $p\in{\varrho^*}^{-1}(\mathrm{dom}(\iota))$, $\Gamma(p)\in\mathrm{dom}(\varrho^*)$ and the equality $\iota(\varrho^*(p))=\varrho^*(\Gamma(p))$ holds. Therefore some restriction of $\Gamma$ will be a realization of $\iota$.
\end{proof}

\begin{co}
Let $\mathcal{B}_1$ be the same as in Theorem \ref{compF1}, and let $\Gamma_1,\ldots,\Gamma_l$ be mappings of $\mathcal{F}_1^l$ into $\mathcal{F}_1$ which are $\mathfrak{S}_1(X)$-computable in $\mathcal{B}_1$. Then the components of the least solution of the system of inequalities \eqref{frt} belong to $\mathcal{B}_1$.
\end{co}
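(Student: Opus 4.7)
The plan is to combine the First Recursion Theorem for iterative combinatory spaces with Theorem~\ref{compF1}, which together yield the result almost immediately.

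First I would invoke the First Recursion Theorem (\cite[Theorem~1 on p.~170]{skordev:ccs}), recalled in the excerpt, which guarantees that since $\Gamma_1,\ldots,\Gamma_l$ are mappings of $\mathcal{F}_1^l$ into $\mathcal{F}_1$ that are $\mathfrak{S}_1(X)$-computable in $\mathcal{B}_1$, the system \eqref{frt} has a least solution $(\theta_1^\circ,\ldots,\theta_l^\circ)$, and moreover each component $\theta_r^\circ$ is itself $\mathfrak{S}_1(X)$-computable in $\mathcal{B}_1$. (One must only verify that $\mathfrak{S}_1(X)$ is indeed iterative, but this is already established in the section on iterative combinatory spaces.)

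Then I would apply Theorem~\ref{compF1}, which states that every element of $\mathcal{F}_1$ that is $\mathfrak{S}_1(X)$-computable in $\mathcal{B}_1$ already belongs to $\mathcal{B}_1$. Applied to each $\theta_r^\circ$ in turn, this yields $\theta_r^\circ\in\mathcal{B}_1$ for $r=1,\ldots,l$, which is the desired conclusion.

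There is no substantive obstacle here, since both ingredients are already in hand: the First Recursion Theorem delivers $\mathfrak{S}_1(X)$-computability of the components in $\mathcal{B}_1$, and Theorem~\ref{compF1} shows that this $\mathfrak{S}_1(X)$-computability in $\mathcal{B}_1$ coincides with membership in $\mathcal{B}_1$ (that is, with $(\rho^*,\rho^*)$-computability). The only point to note is that the theorem on the system \eqref{frt} is formulated for mappings of $\mathcal{F}^l$ into $\mathcal{F}$, so applying it to $\Gamma_1,\ldots,\Gamma_l$ as given is direct; the parameterized version is not needed for this corollary.
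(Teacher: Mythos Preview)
Your proposal is correct and matches the paper's intended argument exactly: the corollary is stated without proof in the paper, and it follows immediately by combining the First Recursion Theorem for iterative combinatory spaces (recalled in Section~\ref{S:1}) with Theorem~\ref{compF1}.
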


\begin{ex}[concerning computability of real numbers and real functions in the usual sense]\label{rec1}
Let $c$ be a computable real number. Let $\alpha:(c,+\infty)\to\mathbb{R}$ and $\beta:(-\infty,c)\times\mathbb{R}\to\mathbb{R}$ be computable. Suppose $\varphi:\mathbb{R}\setminus\{c,c-1,c-2,c-3,\ldots\}\to\mathbb{R}$ is defined recursively as follows:
\[\varphi(x)=
\begin{cases}
\alpha(x)&\text{if }x>c,\\
\beta(x,\varphi(x+1))&\text{if }x<c,\ x\not\in\{c-1,c-2,c-3,\ldots\}.
\end{cases}\]
Then $\varphi$ is also computable. To prove this we may consider the combinatory space $\mathfrak{S}_1(X)$ corresponding to $X=\mathbb{R}$ and use the fact that $\varphi$ is the only solution in it of the equation
\[\theta=\Sigma_1(\chi,\alpha,\beta\Pi_1(I_1,\theta\sigma)),\]
where $\sigma:\mathbb{R}\to\mathbb{R}$ and $\chi:\mathbb{R}\setminus\{c\}\to\{\bm{o},(\bm{o},\bm{o})\}$ are defined as follows:
\begin{align*}
\sigma(x)&=x+1,\\
\chi(x)&=
\begin{cases}
(\bm{o},\bm{o})&\text{if }x>c,\\
\bm{o}&\text{if }x<c
\end{cases}
\end{align*}
(to see that the mapping $\theta\mapsto\Sigma_1(\chi,\alpha,\beta\Pi_1(I_1,\theta\sigma))$ is $\mathcal{B}_1$-computable, we make use of footnote \ref{compSigma}).
\end{ex}

\begin{re}\label{Sigma1-closed}
By footnote \ref{sf}, the operation $\Sigma_1$ preserves $\mathfrak{S}_1(X)$-computability in $\mathcal{B}_1$, hence, by Theorem~\ref{compF1}, the set $\mathcal{B}_1$ is closed under $\Sigma_1$. It is easy to show this also directly: whenever $\gamma_1,\gamma_2,\gamma_3$ are $(\varrho^*,\varrho^*)$-realizations, respectively, of $\varphi_1,\varphi_2,\varphi_3\in\mathcal{F}_1$, we can construct a $(\varrho^*,\varrho^*)$-realization $\gamma$ of $\Sigma_1(\varphi_1,\varphi_2,\varphi_3)$ by setting
\[\gamma(p)(k)=
\begin{cases}
\gamma_2(p)(k)&\text{if }\gamma_1(p)(0)\text{ is odd,}\\
\gamma_3(p)(k)&\text{otherwise}
\end{cases}\]
for all $p\in\mathrm{dom}(\gamma_1)$ such that either $\gamma_1(p)(0)$ is odd and $p\in\mathrm{dom}(\gamma_2)$, or $\gamma_1(p)(0)$ is even and $p\in\mathrm{dom}(\gamma_3)$; if $\gamma_1,\gamma_2,\gamma_3$ are computable then $\gamma$ is also computable.
\end{re}

\subsection{Closedness of the set of the TTE computable functions from \texorpdfstring{$\mathcal{F}_2$}{F2} under \texorpdfstring{$\mathfrak{S}_2(X)$}{S2}-computability} 
Theorem \ref{compF1} has an analog for the $(\varrho^*,\varrho^*)$-computability of functions from $\mathcal{F}_2$ provided that the following definition is accepted (compare with \cite[Clause 4 of Definition 3.1.3]{weihrauch:ca}): if $(X_1,\varrho_1)$ and $(X_2,\varrho_2)$ are represented spaces, then a {\em $(\varrho_1,\varrho_2)$-realization} of a partial function $\varphi$ from $X_1$ to the set of the nonempty subsets of $X_2$ is any partial mapping $\gamma$ of $\mathbb{N}^\mathbb{N}$ into $\mathbb{N}^\mathbb{N}$ such that $p\in\mathrm{dom}(\gamma)$, $\gamma(p)\in\mathrm{dom}(\varrho_2)$ and $\varrho_2(\gamma(p))\in\varphi(\varrho_1(p))$ for any $p\in\varrho_1^{-1}(\mathrm{dom}(\varphi))$; the function $\varphi$ is said to be {\em$(\varrho_1,\varrho_2)$-computable} if a computable $(\varrho_1,\varrho_2)$-realization of $\varphi$ exists. 

\begin{thm}\label{compF2}
Let $\mathcal{B}_2$ be the set of all functions from $\mathcal{F}_2$ which are $(\varrho^*,\varrho^*)$-computable. Then all functions $\mathfrak{S}_2(X)$-computable in $\mathcal{B}_2$ belong to $\mathcal{B}_2$.
\end{thm}

\begin{proof}
For any function $\varphi\in\mathcal{F}_1$, the corresponding function $\tilde{\varphi}$ belongs to $\mathcal{B}_2$ iff $\varphi$ belongs to~$\mathcal{B}_1$. Hence $L_2$ and $R_2$ belong to $\mathcal{B}_2$. The set $\mathcal{B}_2$ is obviously closed under $\mathfrak{S}_2(X)$-multiplication.\footnote{Note that $\mathcal{B}_2$ is surely not closed with respect to the usual composition. Here is an example showing this. Let $h$ be a non-computable function from $\mathbb{N}$ to $\mathbb{N}\setminus\{0\}$, and let $\varphi_1,\varphi_2\in\mathcal{F}_2$ be defined as follows: $\mathrm{dom}(\varphi_1)=\mathbb{N}^*\setminus\{\bm{o}\}$, $\varphi_1(x)=\{x\}$ for any $x\in\mathbb{N}^*\setminus\{\bm{o}\}$, $\mathrm{dom}(\varphi_2)=\mathbb{N}^*$ and, for any $n\in\mathbb{N}$, $\varphi_2(n^*)=\{\bm{o},h(n)^*\}$. Then $\varphi_1,\varphi_2\in\mathcal{B}_2$, but the usual composition $\varphi_1\varphi_2$ transforms $n^*$ into $\{h(n)^*\}$ for any $n\in\mathbb{N}$, therefore it does not belong to $\mathcal{B}_2$.} Hence it remains to prove that $\mathcal{B}_2$ is closed under the operations $\Pi_2$ and $\mathfrak{S}_2(X)$-iteration. Let $\varphi_1,\varphi_2\in\mathcal{B}_2$.  Then there are computable $(\varrho^*,\varrho^*)$-realizations $\gamma_1$ and $\gamma_2$ of $\varphi_1$ and $\varphi_2$, respectively.  The fact that the function $\Pi_2(\varphi_1,\varphi_2)$ belongs to $\mathcal{B}_2$ is shown by using its $(\varrho^*,\varrho^*)$-realization defined as in the proof of Theorem \ref{compF1}. Let now $\iota$ be the $\mathfrak{S}_2(X)$-iteration of $\varphi_1$ controlled by~$\varphi_2$. In order to prove that $\iota$ also belongs to $\mathcal{B}_2$, we will show that the observation used in the mentioned proof holds in the present situation with the following modification: the equality $\iota(\varrho^*(p))=\varrho^*(\gamma_1^n(p))$ must be replaced by the condition $\varrho^*(\gamma_1^n(p))\in\iota(\varrho^*(p))$. Let $p\in{\varrho^*}^{-1}(\mathrm{dom}(\iota))$, and let $H_p$ be the set of all natural numbers $n$ such that 
\begin{enumerate}[label=(\roman*)]
\item $p\in\mathrm{dom}(\gamma_1^n)$;
\item $\gamma_1^i(p)\in\mathrm{dom}(\gamma_2)$ and $\gamma_2(\gamma_1^i(p))(0)$ is odd for all $i<n$.
\end{enumerate}
Clearly $0\in H_p$, and all natural numbers which are less than a number from $H_p$ also belong to~$H_p$. Let $\dot{H}_p$ be the set of the numbers $n\in H_p$ such that $\gamma_1^i(p)\in\mathrm{dom}(\varrho^*)$ for all $i\le n$ and the sequence
\begin{equation}\label{path}
\varrho^*(p),\varrho^*(\gamma_1(p)),\ldots,\varrho^*(\gamma_1^n(p))
\end{equation}
is a $\varphi_1,\varphi_2$-path. We will show that actually $\dot{H}_p=H_p$, {\ie} that $n\in H_p$ implies $n\in\dot{H}_p$. This will be done by induction. The implication holds for $n=0$ since the statement that $0\in\dot{H}_p$ is trivially true. Before taking the inductive step, we will show that
\begin{gather}
\varrho^*(\gamma_1^n(p))\in\mathrm{dom}(\varphi_2)\,\&\,(\varphi_2(\varrho^*(\gamma_1^n(p)))\setminus(X\cup\{\bm{o}\})\ne\varnothing\Rightarrow\varrho^*(\gamma_1^n(p))\in\mathrm{dom}(\varphi_1)),\label{reg}\\
\gamma_1^n(p)\in\mathrm{dom}(\gamma_2),\ \gamma_2(\gamma_1^n(p))\in\mathrm{dom}(\varrho^*),\  \varrho^*(\gamma_2(\gamma_1^n(p)))\in\varphi_2(\varrho^*(\gamma_1^n(p)))\label{chk}
\end{gather}
for any $n\in\dot{H}_p$. Suppose $n\in\dot{H}_p$. Then $\varrho^*(p),\varrho^*(\gamma_1(p)),\ldots,\varrho^*(\gamma_1^n(p))$ is a $\varphi_1,\varphi_2$-path whose initial term belongs to $\mathrm{dom}(\iota)$, therefore the conjunction \eqref{regular} holds with $\varrho^*(\gamma_1^n(p)),\varphi_1,\varphi_2$ in the roles of $u,\sigma,\chi$, respectively, {\ie} the conjunction \eqref{reg} holds, and the first term of this conjunction implies \eqref{chk}. Let now $n$ be a natural number such that $n\in H_p$ implies $n\in\dot{H}_p$, and suppose that $n+1\in H_p$. Then $n\in H_p$, hence $n\in\dot{H}_p$ and therefore the statements \eqref{chk} hold. The assumption $n+1\in H_p$ entails that $\gamma_2(\gamma_1^n(p))(0)$ is odd, hence $\varrho^*(\gamma_2(\gamma_1^n(p)))\not\in X\cup\{\bm{o}\}$, and therefore $\varphi_2(\varrho^*(\gamma_1^n(p)))\setminus(X\cup\{\bm{o}\})\ne\varnothing$. The second term of the conjunction \eqref{reg} implies that $\varrho^*(\gamma_1^n(p))\in\mathrm{dom}(\varphi_1)$, and it follows from here that $\gamma_1^{n+1}(p))
\in\mathrm{dom}(\varrho^*)$, $\varrho^*(\gamma_1^{n+1}(p))\in\varphi_1(\varrho^*(\gamma_1^n(p))$. Thus $n+1\in\dot{H}_p$.

It is easy to see now that $H_p$ is a finite set -- otherwise there would exist an infinite $\varphi_1,\varphi_2$-path $\varrho^*(p),\varrho^*(\gamma_1(p)),\varrho^*(\gamma_1^2(p)),\ldots$, and this is impossible because $\varrho^*(p)\in\mathrm{dom}(\iota)$. Let $n$ be the maximal number in $H_p$. Then the sequence \eqref{path} is a $\varphi_1,\varphi_2$-path. Making use of \eqref{reg} and \eqref{chk}, we will show that $\gamma_2(\gamma_1^n(p))(0)$ is even and $\varrho^*(\gamma_1^n(p))\in\iota(\varrho^*(p))$. Suppose $\gamma_2(\gamma_1^n(p))(0)$ is odd. Then $\varrho^*(\gamma_2(\gamma_1^n(p)))\not\in X\cup\{\bm{o}\}$, therefore $\varrho^*(\gamma_1^n(p))\in\mathrm{dom}(\varphi_1)$. It follows from here that $\gamma_1^n(p)\in\mathrm{dom}(\gamma_1)$ and $\varrho^*(\gamma_1^{n+1}(p))\in\varphi_1(\varrho^*(\gamma_1^n(p)))$. Thus the sequence
\[\varrho^*(p),\varrho^*(\gamma_1(p)),\ldots,\varrho^*(\gamma_1^n(p)),\varrho^*(\gamma_1^{n+1}(p))\]
turns out to be also a $\varphi_1,\varphi_2$-path, contrary to the maximality of $n$. The fact that $\gamma_2(\gamma_1^n(p))(0)$ is even entails $\varrho^*(\gamma_2(\gamma_1^n(p)))\in X\cup\{\bm{o}\}$, hence $\varrho^*(\gamma_1^n(p))\in\iota(\varrho^*(p))$.

Let $\Gamma_1$, $\Gamma_2$, $\Gamma_1^\dag$, $\Gamma_2^\dag$ and $\Gamma$ be constructed in the same way as in the proof of Theorem~\ref{compF1}. Then $\Gamma(p)\in\mathrm{dom}(\varrho^*)$ and $\varrho^*(\Gamma(p))\in\iota(\varrho^*(p))$, whenever $p\in{\varrho^*}^{-1}(\mathrm{dom}(\iota))$.
\end{proof}

\begin{co}
Let $\mathcal{B}_2$ be the same as in Theorem \ref{compF2}, and let $\Gamma_1,\ldots,\Gamma_l$ be mappings of $\mathcal{F}_2^l$ into $\mathcal{F}_2$ which are $\mathfrak{S}_2(X)$-computable in $\mathcal{B}_2$. Then the components of the least solution of the system of inequalities \eqref{frt} belong to $\mathcal{B}_2$.
\end{co}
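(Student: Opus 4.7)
The plan is to combine the First Recursion Theorem for iterative combinatory spaces (stated earlier in the excerpt as \cite[Theorem~1 on p.~170]{skordev:ccs}) with Theorem~\ref{compF2}, in exactly the way the analogous corollary of Theorem~\ref{compF1} is implicitly proved. Since the proof is a direct composition of two already-established results, there is no real obstacle; what matters is making the chain explicit.

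First, I would recall that the combinatory space $\mathfrak{S}_2(X)$ was shown to be iterative, so the First Recursion Theorem applies to it. Given mappings $\Gamma_1,\ldots,\Gamma_l:\mathcal{F}_2^l\to\mathcal{F}_2$ that are $\mathfrak{S}_2(X)$-computable in $\mathcal{B}_2$, the theorem guarantees that the system of inequalities \eqref{frt} has a least solution $(\theta_1^\circ,\ldots,\theta_l^\circ)\in\mathcal{F}_2^l$, and that each component $\theta_r^\circ$ is itself $\mathfrak{S}_2(X)$-computable in $\mathcal{B}_2$ (moreover, the inequalities are satisfied as equalities by this solution, but this is not needed here).

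Second, I would apply Theorem~\ref{compF2} to each component: since $\mathcal{B}_2$ consists of all $(\rho^*,\rho^*)$-computable functions from $\mathcal{F}_2$, and every function $\mathfrak{S}_2(X)$-computable in $\mathcal{B}_2$ belongs to $\mathcal{B}_2$, we immediately conclude $\theta_1^\circ,\ldots,\theta_l^\circ\in\mathcal{B}_2$, which is the claim of the corollary.

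Thus the proof reduces to citing the First Recursion Theorem to obtain $\mathfrak{S}_2(X)$-computability of the components of the least solution in $\mathcal{B}_2$, and then invoking Theorem~\ref{compF2} to transfer this to $(\rho^*,\rho^*)$-computability. The only thing one needs to verify — and this was already established earlier in the paper — is that $\mathfrak{S}_2(X)$ is iterative so that the First Recursion Theorem is indeed applicable.
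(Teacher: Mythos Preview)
Your proposal is correct and matches the paper's approach: the corollary is stated without proof in the paper, being an immediate consequence of the First Recursion Theorem for iterative combinatory spaces (applied to the iterative space $\mathfrak{S}_2(X)$) together with Theorem~\ref{compF2}, exactly as you outline.
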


\begin{ex}[concerning computability of real numbers and real functions in the usual sense]\label{rec2}
Let $c$ be a computable real number. Let the real-valued functions $\alpha$ and $\beta$ with domains $[c,+\infty)$ and $(-\infty,c+1)\times\mathbb{R}$, respectively, be computable and satisfy the condition 
\[\alpha(x)=\beta(x,\alpha(x+1))\]
for $c\le x<c+1$. Suppose the function $\varphi:\mathbb{R}\to\mathbb{R}$ is defined recursively as follows:
\[\varphi(x)=
\begin{cases}
\alpha(x)&\text{if }x\ge c,\\
\beta(x,\varphi(x+1))&\text{otherwise.}
\end{cases}\]
Then $\varphi$ is also computable. To prove this, we may consider the combinatory space $\mathfrak{S}_2(X)$ corresponding to $X=\mathbb{R}$ and use the fact that $\tilde{\varphi}$ is the only solution in it of the equation
\[\theta=\Sigma_2(\chi,\tilde{\alpha},\tilde{\beta}\Pi_2(I_2,\theta\tilde{\sigma})),\]
where $\sigma$ is the same as in Example \ref{rec1}, and $\chi\in\mathcal{F}_2$ with domain $\mathbb{R}$ is defined as follows:
\[\chi(x)=
\begin{cases}
\{(\bm{o},\bm{o})\}&\text{if }x\ge c+1,\\
\{\bm{o},(\bm{o},\bm{o})\}&\text{if }c<x<c+1,\\
\{\bm{o}\}&\text{if }x\le c.
\end{cases}\]
\end{ex}

\begin{re}
The statement that the function $\varphi$ considered in the above example is computable remains valid if we replace the assumptions about the domain of $\beta$ and about the interconnection between $\alpha$ and $\beta$ with the following ones:
\[\mathrm{dom}(\beta)\supseteq(-\infty,c]\times\mathbb{R},\ \ \alpha(c)=\beta(c,\alpha(c+1)).\]
Indeed, let us then consider the function $\alpha_1:[c-1,c]\to\mathbb{R}$ defined by means of the equality $\alpha_1(x)=\beta(x,\alpha(x+1))$. This function is also computable, and the equality $\alpha_1(c)=\alpha(c)$ holds. Let $\bar{\alpha}$ be the join of $\alpha$ and $\alpha_1$, {\ie} their common extension with domain $[c-1,+\infty)$. By \cite[Lemma~4.3.5]{weihrauch:ca}, the function $\bar{\alpha}$ is computable too.\footnote{We may also follow Alex Simpson's proof (mentioned in \cite[Subsection~1.1]{escardo:easdbc}) of the needed instance of the lemma in question and derive the computability of $\bar{\alpha}$ directly from the equality
\[\bar{\alpha}(x)=\alpha(\max(x,c))+\alpha_1(\min(x,c))-\alpha(c).\]} For $c-1\le x<c$, the equalities 
\[\bar{\alpha}(x)=\beta(x,\alpha(x+1))=\beta(x,\varphi(x+1))=\varphi(x)\]
hold, therefore we have
\[\varphi(x)=
\begin{cases}
\bar{\alpha}(x)&\text{if }x\ge c-1,\\
\beta(x,\varphi(x+1))&\text{otherwise.}
\end{cases}\]
for all $x\in\mathbb{R}$. Then we may get the needed conclusion by applying the result from Example~\ref{rec2} with $c-1$, $\bar{\alpha}$ and the restriction of $\beta$ to $(-\infty,c)\times\mathbb{R}$ in the roles of $c$, $\alpha$ and~$\beta$, respectively. 
\end{re}

\begin{re}
In view of footnote \ref{sf}, Theorem \ref{compF2} implies that $\mathcal{B}_2$ is closed with respect to the operation $\Sigma_2$. A direct proof of the last statement can be given similarly to what was done in Remark~\ref{Sigma1-closed}: if $\gamma_1,\gamma_2,\gamma_3$ are $(\varrho^*,\varrho^*)$-realizations, respectively, of $\varphi_1,\varphi_2,\varphi_3\in\mathcal{F}_2$ then the corresponding mapping $\gamma$ constructed in the same way as there turns out to be a $(\varrho^*,\varrho^*)$-realization of $\Sigma_2(\varphi_1,\varphi_2,\varphi_3)$.
\end{re}

\section*{Acknowledgment}
The author thanks an anonymous referee for certain helpful suggestions and corrections.

\end{document}